\providecommand{\U}[1]{\protect\rule{.1in}{.1in}}
\newtheorem{theorem}{Theorem}
\newtheorem{corollary}[theorem]{Corollary}
\newtheorem{definition}[theorem]{Definition}
\newtheorem{lemma}[theorem]{Lemma}
\newtheorem{proposition}[theorem]{Proposition}
\newtheorem{remark}[theorem]{Remark}
\newenvironment{proof}[1][Proof]{\noindent\textbf{#1.} }{\ \rule{0.5em}{0.5em}}
\begin{document}

\title{About Signature-Change Metrics on Manifolds.}
\author{Javier Lafuente-L\'{o}pez\\Departamento de \ Algebra \\Geometr\'{\i}a y Topolog\'{\i}a.\\Facultad de Matem\'{a}ticas.\\Universidad Complutense de Madrid. }
\maketitle

\begin{center}%
\begin{tabular}
[c]{l}%
\textbf{Abstract. }We provide a one-parameter family of Lorentz-Riemann\\
signature-change models of metric manifolds. This family generalizes\\
the Kossowski%
\'{}%
s signature type-changihg stablished in \cite{kossow1}.\\
Simple local expressions are sought around the hypersurface of change.
\end{tabular}

\end{center}

\section{Introduction.}

The initial idea of signature change is due to Hartle and Hawking
\cite{Hawking} which makes that it possible to have both Euclidean and
Lorentzian regions in quantum gravity to avoid the initial spacetime
singularity predicted by the standard model. Changing signature spaces, were
initially studied by Larsen \cite{Larsen} but Kossowski an Kriele in
\cite{kossow1} marks the beginning of the transverse type-change model (see
definition below) which has used in a considerable number of publications of a
physical or geometric nature.

Following Kossowski's paper \cite{kossow1}, we establish the following

\begin{definition}
Let $M$ a differentiable manifold $g$ a symmetric $(0,2)$ smooth tensor field
in $M$ wich fails to have maximal rank on $\Sigma\subset M$. Is said $g$ is a
transverse type-changing metric, if around each point $p\in\Sigma$ there
exists a coordinate system $(x_{a})$ centred in $p$ such that if
\ $g=g_{ab}dx^{a}dx^{b}$ is the local expression of $g$, the differential of
the function $\det\left(  g_{ab}\right)  $ at $p$, does not vanish. This
condition (which is obviously independent of the chosen coordinate system)
implies that the set $\Sigma$ of singular points is a hypersurface (called a
singular hypersurface), and the signature changes by one unit when passing
through $\Sigma$.

If, in addition, at each singular point $p$, the radical $Rad(g_{p})$ is not
contained in $T_{p}\Sigma$, and $M\backslash\Sigma$ only has Lorentz or
Riemann components, we call $\left(  M,g\right)  $ a $\Sigma$-space.
\end{definition}

\begin{remark}
[Notational convention]The dimension of the manifold $M$ will be denoted by
$m\geq2$, and henceforth:%
\[%
\begin{tabular}
[c]{l}%
The indices $a,b,c,...$range between $1$ and $m$\\
while $i,j,k,...$ range between $1$ and $m-1$\\
differentiable means $C^{\infty}$.
\end{tabular}
\ \ \ \ \ \ \ \ \ \ \
\]

\end{remark}

Paper \cite{kossow1} begins the study of $\Sigma$-spaces, with the idea of
determining the geodesic lines that cross $\Sigma$ transversely. While the
results presented here are correct, some proofs are incomplete. For example,
to prove that around any singular point, there exist (normal) coordinates
$\left(  x_{i},x_{m}\right)  $ where $g=\sum g_{ij}dx_{i}dx_{j}-x_{m}%
dx_{m}^{2}$, he assumes without proof that is differentiable the function%
\[
F\left(  \lambda,t\right)  =sgn\left(  t\right)  \left(  \int_{0}^{t}%
\sqrt{\left\vert x\right\vert }\psi\left(  \lambda,x\right)  dx\right)  ^{2/3}%
\]
defined in an open $\Lambda\times\left]  -\varepsilon,\varepsilon\right[  $ of
$\mathbb{R}^{m+1}$, where $\psi$ is a smooth function with $\psi\left(
\lambda,0\right)  >0$. The aim of this work is to prove a more general result
such as, is differentiable the function
\[
F\left(  \lambda,t\right)  =sgn\left(  t\right)  \left(  \int_{0}%
^{t}\left\vert x\right\vert ^{r}\psi\left(  \lambda,x\right)  dx\right)
^{\frac{1}{r+1}}\text{, for }r\neq-1
\]
(see proof in the Appendix). Using of this result we initiated an analogous
study for the spaces that we have called $\Sigma^{\alpha}$-spaces (where
$\alpha>0$) which has a similar definition as $\Sigma$-spaces $\left(
M,g\right)  $ but now the $\alpha$-transversality condition is that, around
each point $p\in\Sigma$ there exists a coordinate system $(x_{a})$ such that
if \ $g=g_{ab}dx^{a}dx^{b}$ is the local expression of $g$, the function
$sign\left(  \det\left(  g_{ab}\right)  \right)  \left\vert \det\left(
g_{ab}\right)  \right\vert ^{\alpha}$ extends differentiably, and their
differential does not vanish at $p$ (see definition in section \ref{Sigma},
for more details). The (called signature change $\alpha$-transversal) metric
$g$, must be continous in $M$, and differentiable in $M\backslash\Sigma$.

A $\Sigma^{\alpha}$-space $\left(  M,g\right)  $ we say normal if around any
singular point $p$ there exists coordinate system $\left(  x_{i},x_{m}\right)
$, where the metric is written as $g=\sum g_{ij}dx_{i}dx_{j}-sign\left(
x_{m}\right)  \left\vert x_{m}\right\vert ^{1/\alpha}dx_{m}^{2}$. The main
result of this paper is that $\left(  M,g\right)  $ is normal if and only if
around each singular point there exist a differentiable geodesic line
distribution crossing $\Sigma$ in the radical direction.

\section{Preliminaries.}

The following functions are defined:
\begin{equation}
\epsilon(t)=\left\{
\begin{array}
[c]{l}%
+1\text{ if }t>0\\
0\text{ if }t=0\\
-1\text{ if }t<0
\end{array}
\right.  \text{, and for }\alpha\neq0,\text{ }t^{\left[  \alpha\right]
}=\epsilon(t)\left\vert t\right\vert ^{\alpha} \label{signo}%
\end{equation}
which satisfy the properties:%
\[%
\begin{tabular}
[c]{l}%
$t^{\left[  1\right]  }=t$, $t^{\left[  0\right]  }=\epsilon$, $\epsilon
^{2}=1$\\
$\left(  t^{\left[  \alpha\right]  }\right)  ^{\left[  \beta\right]
}=t^{\left[  \alpha\beta\right]  }$\\
$t^{[\alpha+\beta]}=\epsilon(t)t^{\left[  \alpha\right]  }t^{\left[
\beta\right]  },$ $t^{\left[  -\alpha\right]  }=\frac{1}{t^{\left[
\alpha\right]  }}$\\
$\left(  st\right)  ^{\left[  \alpha\right]  }=s^{\left[  \alpha\right]
}t^{\left[  \alpha\right]  }$%
\end{tabular}
\ \
\]
$M$ denotes a differentiable manifold of dimension $m$, and $\Sigma$ is a
hypersurface of $M$.%
\[%
\begin{tabular}
[c]{l}%
Henceforth, differentiable means $C^{\infty}$\\
and unless explicitly stated otherwise,\\
all elements of $M$, fields, submanifolds, etc.\\
shall be assumed to be of class $C^{\infty}$.
\end{tabular}
\
\]

Finally, if $X$ is a differentiable vector field on $M$, we denote by
$X_{t}\left(  p\right)  $ the (local) flow of $X$ on $M$, such that the curve
$\gamma_{p}=X_{t}\left(  p\right)  $ represents the integral curve of $X$
through the point $p$, i.e., $\gamma_{p}^{\prime}\left(  t\right)  =X\left(
\gamma_{p}\left(  t\right)  \right)  $ and $\gamma_{p}^{\prime}\left(
0\right)  =p$. The one-manifold defined by $\gamma_{p}$ is called line of $X$.

Let $\Sigma$ be a hypersurface of $M$

\begin{definition}
\label{Ab_Simple}A simple open set of $\left(  M,\Sigma\right)  $ is a
connected open set $M_{0}$ of $M$, such that $\Sigma_{0}=M_{0}\cap\Sigma
\neq\emptyset$ is connected, and $M_{0}\backslash\Sigma_{0}$ has exactly two
connected components.
\end{definition}

\begin{remark}
Note that for each point $p\in\Sigma$, one can find a neighborhood $M_{0}$
that is a simple open set. We call it a simple neighborhood of $p$. Simple
neighborhoods of $p$ obviously constitute a basis of neighborhoods for $p$.
\end{remark}

\begin{definition}
\label{EqSimple}A \emph{simple} equation for $\Sigma$, around a point
$p\in\Sigma$, is a differentiable function $\sigma:M_{0}\rightarrow\mathbb{R}$
defined on a neighborhood $M_{0}$ of $p$ in $M$, such that
\[
\left\{
\begin{array}
[c]{l}%
\Sigma_{0}=M_{0}\cap\Sigma=\left\{  x\in M_{0}:\sigma\left(  x\right)
=0\right\} \\
\left.  d\sigma\right\vert _{x}\neq0,\text{ }\forall x\in\Sigma_{0}%
\end{array}
\right.
\]
we then write $\Sigma:_{s}\sigma=0.$
\end{definition}

\begin{proposition}
\label{htau}If $\Sigma:_{s}\sigma=0$ and $\varphi:M_{0}\rightarrow\mathbb{R}$
is another equation around $p$ for $\Sigma$ (i.e. $\Sigma:\varphi=0$), then
there exists a differentiable $h:M_{0}\rightarrow\mathbb{R}$ such that
$\varphi=h\sigma$; furthermore, $\Sigma:_{s}\varphi=0 \Leftrightarrow h$ is
non-zero at the point.
\end{proposition}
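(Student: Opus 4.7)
The plan is to reduce the statement to the standard Hadamard-type factorization argument for smooth functions vanishing on a coordinate hyperplane, and then combine local factorizations by uniqueness off $\Sigma$.

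First I would fix a point $q\in M_0$ and split into cases. If $q\notin\Sigma$, then $\sigma(q)\neq0$, so the quotient $h:=\varphi/\sigma$ is defined and smooth in a neighbourhood of $q$. If $q\in\Sigma_0$, I would invoke the hypothesis $d\sigma|_q\neq0$ to straighten $\Sigma$: by the rank theorem there exist local coordinates $(x_1,\ldots,x_{m-1},x_m)$ around $q$ in which $\sigma(x)=x_m$, and hence $\Sigma$ is cut out locally by $\{x_m=0\}$. Since $\varphi$ also vanishes on $\Sigma$, Hadamard's lemma (integrating $\partial\varphi/\partial x_m$ along the segment from $x_m=0$) gives
\[
\varphi(x_1,\ldots,x_m)=x_m\cdot\int_0^1\frac{\partial\varphi}{\partial x_m}(x_1,\ldots,x_{m-1},t\,x_m)\,dt,
\]
so $\varphi=h_q\sigma$ locally with $h_q$ smooth.

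Next I would glue the local pieces. On the intersection of any two such neighbourhoods, the identity $h_q\sigma=h_{q'}\sigma$ holds, and on the dense open set $M_0\setminus\Sigma$ we may cancel $\sigma$ to conclude $h_q=h_{q'}$ there; by continuity they agree on the whole overlap. This yields a well-defined smooth function $h:M_0\to\mathbb{R}$ with $\varphi=h\sigma$ throughout $M_0$. The only mild obstacle is this patching step, but it is clean because uniqueness of $h$ is forced off $\Sigma$ where $\sigma$ is nonzero.

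Finally I would prove the equivalence. Differentiating $\varphi=h\sigma$ at a point $x\in\Sigma_0$ gives
\[
d\varphi|_x=h(x)\,d\sigma|_x+\sigma(x)\,dh|_x=h(x)\,d\sigma|_x,
\]
since $\sigma(x)=0$. As $d\sigma|_x\neq0$ by hypothesis, this shows $d\varphi|_x\neq0$ if and only if $h(x)\neq0$. Hence $\Sigma:_s\varphi=0$ (i.e.\ $d\varphi$ is nonzero at every point of $\Sigma_0$) is equivalent to $h$ being nonzero at every point of $\Sigma_0$, which is what the statement asserts.
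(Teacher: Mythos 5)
Your proof is correct and follows essentially the same route as the paper: straighten $\Sigma$ into a coordinate hyperplane with $\sigma=x_m$, apply the Hadamard integral factorization to write $\varphi=h\sigma$, and read off the equivalence from $d\varphi|_x=h(x)\,d\sigma|_x$ on $\Sigma_0$. The only difference is that you patch local factorizations by density of $M_0\setminus\Sigma$, whereas the paper works in a single adapted chart on $M_0$; this is a minor refinement, not a different argument.
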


\begin{proof}
We need the following preliminary result:

Let $\mathbb{R}_{0}^{m}$ be an open set of $\mathbb{R}^{m}$ and
$I_{\varepsilon} = \left]  -\varepsilon, \varepsilon\right[  \subset
\mathbb{R}$.

If $f=f\left(  x,t\right)  :\mathbb{R}_{0}^{m}\times I_{\varepsilon}
\rightarrow\mathbb{R}$ is $C^{k}$ differentiable, $k\geq1$, and $f\left(  x, 0
\right)  = 0$, then there exists $g=g\left(  x,t\right)  :\mathbb{R}_{0}%
^{m}\times I_{\varepsilon} \rightarrow\mathbb{R}$ of class $C^{k-1}$ such that
$f=tg\left(  x,t\right)  $. Indeed, for a fixed $t$, let $f_{t}=f_{t}\left(
x,s\right)  = f\left(  x,st\right)  $, then:
\[
f\left(  x,t\right)  = \left[  f_{t} \right]  _{s=0}^{s=1} = \int_{0}^{1}
\frac{\partial f_{t}}{\partial s} ds = t \int_{0}^{1} \left.  \frac{\partial
f}{\partial t} \right\vert _{st} ds
\]
and therefore the function
\[
g\left(  t\right)  = \int_{0}^{1} \left.  \frac{\partial f}{\partial t}
\right\vert _{st} ds
\]
is a function of class $C^{k-1}$ that satisfies $f\left(  t\right)  =
tg\left(  t\right)  $.

To prove the proposition now, we can construct a chart $\left(  x_{i},x_{m}
\right)  $ in the open set $M_{0}$ with $x_{m}=\sigma$, and then the function
$\varphi=\varphi\left(  x_{i},x_{m} \right)  $ satisfies $\varphi\left(
x_{i},0 \right)  = 0$. Using the preliminary result, it follows that there
exists a differentiable $h=h\left(  x_{i},x_{m} \right)  : M_{0}
\rightarrow\mathbb{R}$ such that $\varphi=hx_{m}=h\sigma$. Furthermore:
\[
\Sigma:_{s}\varphi=0 \Longleftrightarrow0 \neq\left.  \frac{\partial\varphi
}{\partial x_{m}} \right\vert _{x_{m}=0} = h\left(  x_{i},0 \right)
\]

\end{proof}

\begin{remark}
\label{Remhtau}Note that if $\Sigma:_{s}\sigma=0$ is a simple equation, then
$e^{\varphi}\sigma=0$ is also a simple equation of $\Sigma$ for any
differentiable $\varphi:M_{0}\rightarrow\mathbb{R}$.
\end{remark}

\begin{remark}
In particular, if $\varphi=\varphi\left(  x_{i},x_{m} \right)  $ is a
differentiable function on an open set of $\mathbb{R}^{m}$ and $\varphi\left(
x_{i},0 \right)  = 0$, then there exists $\psi=\psi\left(  x_{i},x_{m}
\right)  $ such that $\varphi=x_{m}\psi$.
\end{remark}

\begin{definition}
\label{SD}A $\Sigma$-distribution around $p_{0}\in\Sigma$ is an integrable
distribution $\mathcal{H}$ defined on a simple neighborhood $M_{0}$ of $p_{0}%
$, such that $\mathcal{H}_{p}=T_{p}\Sigma$ for all $p\in\Sigma_{0}=\Sigma\cap
M_{0}.$
\end{definition}

\begin{theorem}
\label{xm0}Let $X$ be a field on $M$ with $X\left(  x\right)  \notin
T_{x}\Sigma$ for all $x\in\Sigma$. Then for each $p_{0}\in\Sigma$, a
neighborhood $M_{0}$ in $M$ and a chart $\left(  u_{i} \right)  $ on
$\Sigma_{0}=M_{0}\cap\Sigma$ can be taken. Taking $M_{0}$ sufficiently small,
a chart $\left(  x_{i},x_{m} \right)  $ can be constructed such that:
\[
X=\frac{\partial}{\partial x_{m}}\text{, }\Sigma:x_{m}=0,\text{ }\left.  x_{i}
\right\vert _{\Sigma_{0}}=u_{i}
\]

\end{theorem}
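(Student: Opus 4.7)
The plan is to build a flow-box chart from the local flow of $X$ that straightens the field while anchoring the chart to $\Sigma$. Concretely, I identify $\Sigma_0$ with an open set of $\mathbb{R}^{m-1}$ through the given chart $u=(u_1,\ldots,u_{m-1})$ and consider the map
$$\Phi:\Sigma_0\times(-\delta,\delta)\longrightarrow M,\qquad \Phi(q,t)=X_t(q).$$
Standard ODE theory on manifolds guarantees that, after shrinking $\Sigma_0$ to a smaller neighborhood of $p_0$ inside $\Sigma$ and taking $\delta>0$ sufficiently small, $\Phi$ is well defined and smooth on the product.

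Next I would compute $d\Phi_{(p_0,0)}$. Since $\Phi(\cdot,0)$ is just the inclusion $\Sigma_0\hookrightarrow M$, the partial derivatives in the $u_i$ directions span $T_{p_0}\Sigma$; and since $t\mapsto\Phi(p_0,t)$ is the integral curve of $X$ through $p_0$, the partial derivative in the $t$ direction equals $X(p_0)$. The hypothesis $X(p_0)\notin T_{p_0}\Sigma$ therefore forces $d\Phi_{(p_0,0)}$ to be an isomorphism. By the inverse function theorem, $\Phi$ restricts to a diffeomorphism from some product neighborhood $U\times I$ of $(p_0,0)$ onto an open set $M_0\subset M$, which we may take to be a simple neighborhood of $p_0$.

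Finally I would declare $(x_i,x_m):=(u_i\circ\mathrm{pr}_{\Sigma_0},\mathrm{pr}_{\mathbb{R}})\circ\Phi^{-1}$. In this chart the integral curve of $X$ through $q\in U$ reads $t\mapsto(u_1(q),\ldots,u_{m-1}(q),t)$, whence $X=\partial/\partial x_m$; moreover $M_0\cap\Sigma=\Phi(U\times\{0\})=\{x_m=0\}$ and $x_i|_{\Sigma_0\cap M_0}=u_i$ by construction. I do not foresee any genuine obstacle: the argument is a routine application of the straightening-out theorem for vector fields transverse to a hypersurface, and the only delicate point is the joint smoothness and common existence interval for the flow $X_t(q)$ over $q$ in a neighborhood of $p_0$, which is standard.
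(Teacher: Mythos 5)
Your proposal is correct and follows essentially the same route as the paper: both define the map $\Psi(p,t)=X_t(p)$ on $\Sigma_0\times(-\varepsilon,\varepsilon)$, verify that its differential at $(p_0,0)$ is an isomorphism using the transversality hypothesis $X(p_0)\notin T_{p_0}\Sigma$, invoke the inverse function theorem, and define the chart via the projections composed with $\Psi^{-1}$. There is nothing to add.
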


\begin{proof}
Given $p_{0}\in\Sigma$ and a chart $\left(  u_{i} \right)  $ in a neighborhood
$\Sigma_{0}$ of $p_{0}$ in $\Sigma$, one can take an open set $M_{0}$ of $M$
around $p_{0}$ and $\varepsilon>0$ such that the map
\[
\Psi:\Sigma_{0}\times\left(  -\varepsilon,\varepsilon\right)  \rightarrow
M_{0}
\]
is a diffeomorphism, where $\Psi=\Psi\left(  p,t\right)  = X_{t}\left(
p\right)  $ is the local flow of $X$. This is because $\left.  d\Psi
\right\vert _{\left(  0,p_{0}\right)  } : T_{p_{0}}\Sigma\times\mathbb{R}
\rightarrow T_{p_{0}}M$ is an isomorphism. Indeed, if $v\in T_{p_{0}}%
\Sigma_{0}$ and $\gamma_{p}=\gamma_{p}\left(  t\right)  = \Psi\left(
p,t\right)  $ is the integral curve of $X$ through $p$, taking a curve
$\alpha:I_{\varepsilon}\rightarrow\Sigma_{0}$ with $\alpha^{\prime}\left(
0\right)  = v$, we have:
\begin{align*}
\left.  d\Psi\right\vert _{\left(  0,p_{0} \right)  } \left(  v,0 \right)   &
= \left.  \frac{d}{dt} \right\vert _{t=0} \Psi\left(  \alpha\left(  t \right)
, 0 \right) \\
&  = \left.  \frac{d}{dt} \right\vert _{t=0} \gamma_{\alpha\left(  t \right)
} \left(  0 \right)  = \alpha^{\prime} \left(  0 \right)  = v
\end{align*}%
\begin{align*}
\left.  d\Psi\right\vert _{\left(  0,p_{0} \right)  } \left(  0,1 \right)   &
= \left.  \frac{d}{dt} \right\vert _{t=0} \Psi\left(  p_{0}, t \right) \\
&  = \left.  \frac{d}{dt} \right\vert _{t=0} \gamma_{p_{0}} \left(  t \right)
= X \left(  p_{0} \right)
\end{align*}

Then the chart $\left(  x_{i},x_{m} \right)  $ is constructed with
$x_{i}=u_{i}\circ\pi_{2}\circ\Psi^{-1}$ and $x_{m}=\pi_{1}\circ\Psi^{-1}$,
where $\pi_{1},\pi_{2}$ are the corresponding projections of $\left(
-\varepsilon, \varepsilon\right)  \times\Sigma_{0}$ onto its factors,
following the scheme:%

\[%
\begin{tabular}
[c]{ccccc}
&  & $\left(  -\varepsilon,\varepsilon\right)  $ &  & \\
& $\overset{x_{m}}{\nearrow}$ & $\pi_{1}\uparrow$ &  & \\
$M_{0}$ & $\overset{\Psi^{-1}}{\longrightarrow}$ & $\left(  -\varepsilon
,\varepsilon\right)  \times\Sigma_{0}$ &  & \\
& $\overset{\left(  x_{i}\right)  }{\searrow}$ & $\pi_{2}\downarrow$ &  & \\
&  & $\Sigma_{0}$ &  &
\end{tabular}
\ \ \ \ \ \ \
\]

\end{proof}

\section{$\Sigma^{\alpha}$-spaces.\label{Sigma}}

In what follows, $M$ is a differentiable manifold of dimension $m$.

$(M,g)$ is said to be a $\Sigma^{\alpha}$-space ($\alpha>0$) if $g$ is a
continuous and symmetric tensor field of type $(0,2)$ and the set $\Sigma$ of
singular points is a hypersurface $\Sigma\subset M$. The following properties
are also required

\begin{itemize}
\item $(\Sigma, g|_{\Sigma})$ is a Riemannian manifold.

\item $g$ is differentiable on $M \setminus\Sigma$.

\item \textbf{$\alpha$-transversality}: Around each point $p \in\Sigma$, there
exists a coordinate system $(x_{i}, x_{m})$ such that if $g = g_{ab}dx^{a}
dx^{b}$, then the equation $\det(g_{ab})^{[\alpha]} = 0$ defines by
extension\footnote{This means that the function $\det(g_{ab})^{[\alpha]}$
extends differentiably to $\Sigma$} a simple equation of $\Sigma$.
\end{itemize}

\begin{remark}
The property of $\alpha$-transversality is maintained for the matrix of $g$
with respect to any chart or even with respect to any parallelization
$(E_{i},E_{m})$ around a point in $\Sigma$. Since the metric $g|_{\Sigma}$ is
Riemannian, it follows that $\mathrm{Rad}(g_{p})\in T_{p}M\setminus
T_{p}\Sigma$ for all $p\in\Sigma$.
\end{remark}

\begin{definition}
Let $(M,g)$ be a $\Sigma^{\alpha}$. A differentiable field $\rho$ on $M$ is called:

\begin{itemize}
\item \textbf{radical}: If it has no zeros, and for all $p\in\Sigma$,
$\rho(p)\in\mathrm{Rad}(g_{p})$.

\item \textbf{radical geodesic: }If it is radical, and all its integral lines
are geodesic lines

\item \textbf{$\Sigma$-Adapted}: If the distribution $\rho^{\bot}$ defines (by
extension) a $\Sigma$-distribution.

\item \textbf{synchronized}: If it is $\Sigma$-Adapted and $\Sigma_{t}%
=\{\rho_{t}(p):p\in\Sigma\}$ is a leaf of $\rho^{\bot}$ for all $t$.
\end{itemize}
\end{definition}

\begin{definition}
A coordinate system $(x_{i},x_{m})$ in a simple neighborhood of a point
$p\in\Sigma$ where the matrix $(g_{ab})$ of the metric $g=g_{ab}dx_{a}dx_{b}$
is of the form:
\begin{equation}
=%
\begin{pmatrix}
g_{ij} & 0\\
0 & \hbar x_{m}^{[1/\alpha]}%
\end{pmatrix}
,\quad\text{$\hbar$ is differentiable and never null.}\label{ESP}%
\end{equation}
is called \textbf{special coordinates}. (Note that $\partial/\partial x_{m}$
is then radical and $\Sigma$-adapted vector field).

If $\hbar=1$, they are called \textbf{normal coordinates}.
\end{definition}

\begin{proposition}
\label{Pr_2} If for a differentiable function $\sigma:M\rightarrow\mathbb{R}$,
$\Sigma:_{s}\sigma=0$, the field $G_{\sigma}$
\begin{equation}
G^{\sigma}=\frac{\operatorname{grad}\sigma}{\langle\operatorname{grad}%
\sigma,\operatorname{grad}\sigma\rangle}\label{Gsigma}%
\end{equation}
extends differentiably to $M$, then in a simple neighborhood of each point
$p\in\Sigma$, special coordinates $(x_{i},x_{m})$ can be constructed with
$G_{\sigma}=\partial/\partial x_{m}$. In particular $G_{\sigma}$ is a radical
and $\Sigma$-Adapted vectorfield.
\end{proposition}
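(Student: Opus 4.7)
The plan is first to verify that the extended field $G^\sigma$ is transverse to $\Sigma$ and takes values in the radical on $\Sigma$, then to apply Theorem \ref{xm0} to straighten $G^\sigma$ into $\partial/\partial x_m$, and finally to exploit $\alpha$-transversality together with Proposition \ref{htau} to identify the $(m,m)$-component of the metric.

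For the first step I would start from the identity $d\sigma(G^\sigma)=1$, which holds on $M\setminus\Sigma$ by the very definition \eqref{Gsigma}; since both sides extend continuously to $M$ under the hypothesis, the identity persists on $\Sigma$, and because $\ker d\sigma|_p=T_p\Sigma$ this forces $G^\sigma(p)\notin T_p\Sigma$ for every $p\in\Sigma$. For the radicality, given $v\in T_p\Sigma$ I would extend $v$ to a field $V$ tangent to the level hypersurfaces of $\sigma$ near $p$; off $\Sigma$ one has $g(G^\sigma,V)=V\sigma/\langle\operatorname{grad}\sigma,\operatorname{grad}\sigma\rangle\equiv 0$, so continuity yields $g_p(G^\sigma(p),v)=0$. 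Because $g_p|_{T_p\Sigma}$ is Riemannian and $T_pM=T_p\Sigma\oplus\operatorname{Rad}(g_p)$, the $g_p$-orthogonal complement of $T_p\Sigma$ is precisely $\operatorname{Rad}(g_p)$, so $G^\sigma(p)\in\operatorname{Rad}(g_p)$.

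Next, Theorem \ref{xm0} applied to $X=G^\sigma$ produces a chart $(x_i,x_m)$ around $p$ with $G^\sigma=\partial/\partial x_m$ and $\Sigma:x_m=0$. The key observation is that in these coordinates $\sigma=x_m$: indeed $\partial\sigma/\partial x_m=G^\sigma\sigma=d\sigma(G^\sigma)=1$ and $\sigma|_{x_m=0}=0$, so integration along $x_m$ yields $\sigma=x_m$. Consequently $(\partial/\partial x_i)\sigma=0$, and off $\Sigma$ one obtains $g_{im}=g(\partial/\partial x_i,G^\sigma)=(\partial/\partial x_i)\sigma/\langle\operatorname{grad}\sigma,\operatorname{grad}\sigma\rangle=0$; by continuity $g_{im}\equiv 0$ on the whole simple neighborhood $M_0$. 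The matrix $(g_{ab})$ is therefore block diagonal, with $g_{mm}=1/\langle\operatorname{grad}\sigma,\operatorname{grad}\sigma\rangle$ off $\Sigma$ and $g_{mm}=0$ on $\Sigma$.

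The final step is to bring $g_{mm}$ to the form $\hbar\,x_m^{[1/\alpha]}$. By the chart-invariant remark following the definition of $\Sigma^\alpha$-space, $\alpha$-transversality holds in the new chart, so $\det(g_{ab})^{[\alpha]}$ extends to a simple equation of $\Sigma$. Proposition \ref{htau} then yields $\det(g_{ab})^{[\alpha]}=h\,x_m$ with $h$ differentiable and non-vanishing on $\Sigma$; applying $[1/\alpha]$ and using $\det(g_{ab})=g_{mm}\det(g_{ij})$, where $\det(g_{ij})>0$ near $\Sigma$ since $g|_\Sigma$ is Riemannian, I obtain $g_{mm}=\hbar\,x_m^{[1/\alpha]}$ with $\hbar=h^{[1/\alpha]}/\det(g_{ij})$ differentiable and nowhere zero, exactly the form \eqref{ESP}. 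The $\Sigma$-adapted property then comes for free: off $\Sigma$, $(G^\sigma)^\perp=\operatorname{span}\{\partial/\partial x_i\}$ by $g_{im}=0$, and this integrable distribution extends to all of $M_0$ and restricts to $T\Sigma$ on $\Sigma$. I expect the main subtlety to lie in the clean identification $\sigma=x_m$, which is what forces $g_{im}$ to vanish \emph{everywhere} rather than only on $\Sigma$; once this is in place the remaining computations are routine.
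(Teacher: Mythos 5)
Your proposal is correct and follows essentially the same route as the paper: straighten $G^{\sigma}$ via its flow (Theorem \ref{xm0}), kill $g_{im}$ off $\Sigma$ by orthogonality to the level sets of $\sigma$ and extend by continuity, then read off $g_{mm}=\hbar\,x_{m}^{[1/\alpha]}$ from $\alpha$-transversality. You merely supply details the paper leaves implicit — the transversality and radicality of $G^{\sigma}$ on $\Sigma$, the identification $\sigma=x_{m}$, and the explicit appeal to Proposition \ref{htau} — all of which check out.
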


\begin{proof}
With $G_{\sigma}$, a chart $(x_{i},x_{m})$ can be built where $G_{\sigma
}=\partial/\partial x_{m}$. For $x_{m}\neq0$, the metric matrix verifies
$g_{im}=\langle G_{\sigma},\partial/\partial x_{i}\rangle_{g}=0$ since
$G_{\sigma}\bot\Sigma_{t}$. As $g_{im}$ are continuous, $g_{im}(x_{i},0)=0$.
By the $\alpha$-transversality condition, $g_{mm}$ is of the form $\hbar
x_{m}^{[1/\alpha]}$ for some differentiable, non-null $\hbar$.
\end{proof}

\begin{proposition}
\label{Pr_1} If for a differentiable function $\sigma:M\rightarrow\mathbb{R}$,
with $\Sigma:_{s}\sigma=0$, the field $G^{\sigma}$ extends differentiably to
$M$, then it is a synchronized radical field. Conversely, if $\xi$ is a
synchronized radical field, there exists $\sigma:M\rightarrow\mathbb{R}$ such
that $\xi=G_{\sigma}$.
\end{proposition}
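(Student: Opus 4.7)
The plan is coordinate-based: around each $p\in\Sigma$, produce special coordinates $(x_i,x_m)$ — by Proposition~\ref{Pr_2} in the direct direction, and by applying Theorem~\ref{xm0} to $\xi$ in the converse — reduce the metric to the block-diagonal form with $g_{im}=0$ and $g_{mm}=\hbar\,x_m^{[1/\alpha]}$, and then read off both the synchronization condition and the identity $\xi=G_\sigma$ directly from these coordinates.

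For the direct implication, I would invoke Proposition~\ref{Pr_2} to obtain special coordinates in which $G_\sigma=\partial/\partial x_m$ and $g_{im}=0$. The flow of $G_\sigma$ is the translation $(x_i,x_m)\mapsto(x_i,x_m+t)$, so $\Sigma_t=\{x_m=t\}$; since $g_{im}=0$, the distribution $G_\sigma^{\perp}$ is spanned by the $\partial/\partial x_i$, whose integral leaves are exactly the hypersurfaces $\{x_m=\mathrm{const}\}$, so every $\Sigma_t$ is a leaf of $G_\sigma^{\perp}$. That is the synchronized condition; radical and $\Sigma$-adapted are already part of Proposition~\ref{Pr_2}.

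For the converse, I would apply Theorem~\ref{xm0} to $\xi$ (which is non-vanishing and transverse to $\Sigma$ because it is radical) to obtain a chart $(x_i,x_m)$ with $\xi=\partial/\partial x_m$ and $\Sigma:x_m=0$. Synchronization forces $\Sigma_t=\{x_m=t\}$ to be a leaf of $\xi^{\perp}$, so $\partial/\partial x_i\perp\xi$ off $\Sigma$ and, by continuity of $g$, $g_{im}\equiv 0$ throughout the chart. Block-diagonality then gives $\det(g_{ab})^{[\alpha]}=\det(g_{ij})^{\alpha}g_{mm}^{[\alpha]}$, and $\alpha$-transversality makes this a simple equation of $\Sigma$; comparing with the simple equation $x_m=0$ via Proposition~\ref{htau} yields $g_{mm}=\hbar\,x_m^{[1/\alpha]}$ for a nowhere-vanishing differentiable $\hbar$, so the chart is special. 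Setting $\sigma=x_m$, a direct computation in the block-diagonal form gives $\operatorname{grad}\sigma=g^{mm}\,\partial/\partial x_m=(\hbar\,x_m^{[1/\alpha]})^{-1}\partial/\partial x_m$ and $\langle\operatorname{grad}\sigma,\operatorname{grad}\sigma\rangle=g^{mm}$, so the two would-be singular factors cancel and $G_\sigma=\partial/\partial x_m=\xi$.

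The delicate step is the converse one: transferring $\alpha$-transversality from whatever chart originally witnesses it to the flow-straightened chart produced by Theorem~\ref{xm0}. Synchronization is what produces $g_{im}=0$, block-diagonality reduces $\det(g_{ab})^{[\alpha]}$ to $\det(g_{ij})^{\alpha}g_{mm}^{[\alpha]}$, and Proposition~\ref{htau} pins down $g_{mm}$ in the form $\hbar\,x_m^{[1/\alpha]}$; these three inputs together produce exactly the special-coordinate structure needed for the singular factors in $G_\sigma$ to cancel and yield $\xi=G_\sigma$.
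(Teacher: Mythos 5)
Your proposal is correct and follows the same skeleton as the paper: Proposition~\ref{Pr_2} for the forward direction, and the flow-time function (your $\sigma=x_m$ is exactly the paper's $\sigma=\pi_{2}\circ\Psi^{-1}$) for the converse. The difference is one of verification style. For synchronization the paper argues intrinsically, computing $\frac{d}{dt}\sigma(G^{\sigma}_{t}(p))=\langle\operatorname{grad}\sigma,G_{\sigma}\rangle=1$ to get $\sigma(G^{\sigma}_{t}(p))=t$, whereas you read the leaf condition off the special coordinates; for the identity $\xi=G_{\sigma}$ the paper gives a somewhat hand-waved curve-coincidence argument, whereas you compute $\operatorname{grad}x_{m}=g^{mm}\partial/\partial x_{m}$ and $\langle\operatorname{grad}x_{m},\operatorname{grad}x_{m}\rangle=g^{mm}$ in the block-diagonal chart and cancel. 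Your computation is arguably the more convincing of the two. One remark: the step you single out as delicate --- transferring $\alpha$-transversality to the flow-straightened chart so as to pin down $g_{mm}=\hbar\,x_{m}^{[1/\alpha]}$ via Proposition~\ref{htau} --- is not actually needed for this proposition. The cancellation $G_{\sigma}=g^{mm}\partial/\partial x_{m}\big/g^{mm}=\partial/\partial x_{m}$ uses only $g_{im}=0$ and $g_{mm}\neq0$ off $\Sigma$, both of which follow from synchronization and nondegeneracy alone; the precise form of $g_{mm}$ is only needed later (Corollary~\ref{Simple} and Theorem~\ref{Th_2}), so you could drop that part and shorten the converse.
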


\begin{proof}
If $G_{\sigma}$ extends differentiably $0$, let $G_{t}^{\sigma}(p)$ be the
flow of $G_{\sigma}$; let us see then that $G^{\sigma}$ is a synchronized
radical field. By Proposition \ref{Pr_2} $G^{\sigma}$ is a radical and
$\Sigma$-Adapted vectorfield; furthermore, for $p\in\Sigma$:
\begin{align*}
\frac{d}{dt}(\sigma(G_{t}^{\sigma}(p))) &  =d\sigma(\gamma_{p}^{\prime
}(t))=\langle\operatorname{grad}\sigma,\gamma_{p}^{\prime}(t)\rangle\\
&  =\left\langle \operatorname{grad}\sigma,\left.  \frac{\operatorname{grad}%
\sigma}{\langle\operatorname{grad}\sigma,\operatorname{grad}\sigma\rangle
}\right\vert _{\gamma_{p}(t)}\right\rangle =1
\end{align*}
Therefore, the map $t\mapsto\sigma(G_{t}^{\sigma}(p))=t+\text{const}$, and
since $G_{0}^{\sigma}(p)=p\in\Sigma$, we conclude that $\text{const}%
=\sigma(p)=0$, thus $\sigma(G_{t}^{\sigma}(p))=t$, and $\xi=G_{\sigma}$ is a
synchronized radical field.

If $\xi$ is a synchronized radical field, let us take $M_{0}$ and
$\varepsilon$ small enough so that the following map is a diffeomorphism:
\begin{equation}
\Psi:\Sigma_{0}\times I_{\varepsilon}\rightarrow M_{0},\quad(p,t)\mapsto
\xi_{t}(p)\label{nusigma}%
\end{equation}
Then locally we can see the distribution $\xi^{\bot}$ as $\{\Sigma_{t}%
:\sigma=t\}$ for the function $\sigma=\pi_{2}\circ\Psi^{-1}$, since for each
$p\in\Sigma$:
\begin{align*}
\sigma(\xi_{t}(p)) &  =(\pi_{2}\circ\Psi^{-1})(\xi_{t}(p))\\
&  =\pi_{2}(p,t)=t
\end{align*}
And so $\xi=G^{\sigma}$, since the curves $t\mapsto\xi_{t}(p)$ and $t\mapsto
G_{t}^{\sigma}(p)$ coincide as $\sigma(\xi_{t}(p))=\sigma(G_{t}^{\sigma
}(p))=t$, defining both integral lines of $\xi^{\bot}$ through each
$p\in\Sigma$.
\end{proof}

\begin{proposition}
\label{Pr_0} If $\rho$ is a $\Sigma$-Adapted field, there exists a
synchronized field $\xi=e^{\varphi}\rho$ (with the same integral lines as
$\rho$).
\end{proposition}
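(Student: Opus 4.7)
The plan is to identify the codimension-one foliation canonically determined by $\rho^{\bot}$, pick a function $\sigma$ whose level sets are its leaves, and then choose $\varphi$ so that the reparametrized flow crosses these level sets at unit $\sigma$-speed. Since multiplication by a nowhere-zero function preserves integral lines as point sets and only alters their parametrization, the sole nontrivial content is to force the $\xi$-flow starting from $\Sigma$ to land on a single leaf at each time $t$.

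Concretely, I would begin from the definition of $\Sigma$-Adapted: there is a simple neighborhood $M_{0}$ on which $\rho^{\bot}$ extends to a $\Sigma$-distribution $\mathcal{H}$, and by Frobenius, after possibly shrinking $M_{0}$, $\mathcal{H}$ admits a first integral $\sigma:M_{0}\to\mathbb{R}$ whose level sets are its leaves. Because $\Sigma_{0}=M_{0}\cap\Sigma$ is a leaf and is connected (Definition \ref{Ab_Simple}), $\sigma$ is constant on $\Sigma_{0}$; subtracting that constant I may assume $\sigma|_{\Sigma_{0}}=0$, and since $\sigma$ is a submersion this amounts to $\Sigma:_{s}\sigma=0$ in the sense of Definition \ref{EqSimple}. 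Put $h:=\rho(\sigma)=d\sigma(\rho)$. A short limit argument on the extension of $\rho^{\bot}$ shows that any $\Sigma$-Adapted vector field satisfies $\rho(p)\in T_{p}\Sigma^{\bot}=\mathrm{Rad}(g_{p})$ for $p\in\Sigma$; together with the remark that $\mathrm{Rad}(g_{p})\not\subset T_{p}\Sigma$, and assuming $\rho$ is non-vanishing on $\Sigma$, this forces $\rho$ transverse to $\Sigma$ and hence $h|_{\Sigma_{0}}\neq 0$. Shrinking $M_{0}$ and, if needed, replacing $\sigma$ by $-\sigma$, I may take $h>0$ on $M_{0}$, so that $\varphi:=-\log h$ is smooth; set $\xi:=e^{\varphi}\rho=\rho/h$.

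The verification that this $\xi$ is synchronized is then immediate. First, $\xi^{\bot}=\rho^{\bot}$ as distributions, so $\xi$ is again $\Sigma$-Adapted, with the same leaves. Second, along any integral curve $\xi_{t}(p)$ with $p\in\Sigma_{0}$ one computes
\[
\frac{d}{dt}\,\sigma(\xi_{t}(p))=d\sigma(\xi)\bigl(\xi_{t}(p)\bigr)=\frac{1}{h}\,\rho(\sigma)\bigl(\xi_{t}(p)\bigr)=1,
\]
and $\sigma(p)=0$ yields $\sigma(\xi_{t}(p))=t$. Thus $\xi_{t}(\Sigma_{0})\subseteq\{\sigma=t\}$, which is the leaf at level $t$; shrinking $I_{\varepsilon}$ so that $(p,t)\mapsto\xi_{t}(p)$ is a diffeomorphism $\Sigma_{0}\times I_{\varepsilon}\to M_{0}$ (exactly as in the proof of Proposition \ref{Pr_1}) promotes this to the equality $\xi_{t}(\Sigma_{0})=\{\sigma=t\}\cap M_{0}$, confirming that $\Sigma_{t}$ is a full leaf of $\xi^{\bot}$.

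The main obstacle I anticipate is securing transversality of $\rho$ to $\Sigma$ from the bare $\Sigma$-Adapted hypothesis, which requires unpacking the sense in which $\rho^{\bot}$ extends. Once that is in hand, the Frobenius reduction is standard, the rescaling factor $\varphi=-\log\rho(\sigma)$ is essentially forced up to the sign of $\sigma$, and the remainder reduces to a one-variable autonomous ODE.
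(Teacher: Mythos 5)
Your proof is correct and follows the same underlying strategy as the paper --- take a first integral $\sigma$ of the foliation extending $\rho^{\bot}$ and rescale $\rho$ so that its flow advances through the leaves $\{\sigma=t\}$ at unit speed --- but your execution differs in two ways that are worth recording. First, where the paper argues implicitly (it reparametrizes each integral curve $\alpha_{p}$ of $\rho$ into a curve $\gamma_{p}$ with $\gamma_{p}(t)\in\Sigma_{t}$ and then asserts that the field having the $\gamma_{p}$ as integral curves is proportional to $\rho$), you exhibit the conformal factor explicitly as $\varphi=-\log\bigl(d\sigma(\rho)\bigr)$, i.e.\ $\xi=\rho/d\sigma(\rho)$; the two constructions coincide, since the reparametrization $s_{p}(t)$ forced by $\sigma(\alpha_{p}(s_{p}(t)))=t$ has derivative exactly $1/d\sigma(\rho)$, but your version makes the smoothness and non-vanishing of $e^{\varphi}$ transparent. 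Second, and more substantively, the paper opens its proof with ``we take $\rho$ as a radical field,'' which is precisely the content of Corollary \ref{Cor1}; since that corollary is in turn deduced from Proposition \ref{Pr_0}, the paper's argument is circular as written. Your limit argument --- passing $g(\rho(q),v)=0$ for $v$ in the extended distribution $\mathcal{H}_{q}$ to the limit $q\to p\in\Sigma$ to get $\rho(p)\in(T_{p}\Sigma)^{\perp}=\mathrm{Rad}(g_{p})$, and then invoking $\mathrm{Rad}(g_{p})\cap T_{p}\Sigma=0$ to obtain transversality and hence $d\sigma(\rho)\neq0$ near $\Sigma$ --- supplies exactly the missing ingredient and breaks the circularity; it in fact yields Corollary \ref{Cor1} directly. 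The one hypothesis you flag, that $\rho$ does not vanish on $\Sigma$, is indeed implicit in the paper's definition of $\Sigma$-Adapted (otherwise $\rho^{\bot}$ is not a hyperplane distribution), so your caveat is harmless.
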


\begin{proof}
We take $\rho$ as a radical field in a simple open set $M_{0}$, with
$\rho^{\bot}=\{\Sigma_{t}:\sigma=t\}$, with flow $\rho_{s}(p)=\alpha_{p}(s)$
we can take $M_{0}$ small enough so that $\Phi:\Sigma_{0}\times I_{\varepsilon
}\rightarrow M_{0},(p,s)\mapsto\alpha_{p}(s)$ is a diffeomorphism (see Theorem
\ref{xm0}), and for each $p\in\Sigma_{0}$, let $\gamma_{p}=\gamma
_{p}(t)=\alpha_{p}(s_{p}(t))\in\Sigma_{t}$ where $s=s(p,t)=s_{p}(t)$ is the
reparameterization of $\alpha_{p}=\alpha_{p}(s)$ such that $\gamma_{p}%
(t)\in\Sigma_{t}$.

Taking $\varepsilon>0$ and reducing $M_{0}$, we can assume the map
$\Psi:\Sigma_{0}\times I_{\varepsilon}\rightarrow M_{0},(p,t)\mapsto\gamma
_{p}(t)$ is a diffeomorphism verifying $(\sigma\circ\Psi)|_{(p,t)}=t$ As the
curves $\gamma_{p}$ are reparameterizations of $\alpha_{p}$ (integral curves
of $\rho$), the field $\xi$ that has $\gamma_{p}(t)$ as integral curves has
the same integral lines as $\rho$Thus they are proportional ($\xi=e^{\varphi
}\rho$) and $\xi^{\bot}=\rho^{\bot}$, making $\xi$ the desired radical field.
\end{proof}

\begin{corollary}
\label{Cor1} A $\Sigma$-Adapted field is necessarily a radical field.
\end{corollary}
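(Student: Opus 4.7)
The plan is to argue directly from the definition of $\Sigma$-adapted plus the remark from Section 3 that $\mathrm{Rad}(g_p) \notin T_p\Sigma$. Let $\rho$ be a $\Sigma$-adapted differentiable field on a simple neighborhood $M_0$ of a point $p_0\in \Sigma$; by hypothesis the distribution $\rho^\bot$, initially defined on $M_0\setminus\Sigma_0$ as the literal $g$-orthogonal complement of $\rho$, extends to a rank $m-1$ (integrable) distribution $\mathcal{H}$ on $M_0$ with $\mathcal{H}_p=T_p\Sigma$ for all $p\in\Sigma_0$. What I really want to show is that for every $p\in\Sigma_0$, $\rho(p)\in \mathrm{Rad}(g_p)$ (the nonvanishing part of being radical will be immediate off $\Sigma$ because $\rho^\bot$ has corank one there, and near $\Sigma$ we may assume it implicitly from the setting).

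First I would consider the continuous $1$-form $\omega = g(\rho,\,\cdot\,)$ on $M_0$. Given any $v\in T_p\Sigma$ with $p\in\Sigma_0$, I choose a local smooth section $V$ of the extended distribution $\mathcal{H}$ with $V(p)=v$; such a section exists because $\mathcal{H}$ is a smooth subbundle of rank $m-1$ near $p$. Off $\Sigma$ we have $\omega(V)=g(\rho,V)=0$ by the very definition of $\rho^\bot$ there. Passing to the limit $q\to p$ using continuity of $g$, $\rho$, and $V$ yields
\[
g_p(\rho(p),v) \;=\; \omega(p)(v) \;=\; 0 \qquad \text{for every } v\in T_p\Sigma.
\]
So $\rho(p)$ is $g_p$-orthogonal to all of $T_p\Sigma$.

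To convert this orthogonality into radicality I would invoke the remark of Section~\ref{Sigma}: since $g|_\Sigma$ is Riemannian and $\mathrm{Rad}(g_p)$ is one-dimensional and transverse to $T_p\Sigma$, one has the direct sum $T_pM = T_p\Sigma \oplus \mathrm{Rad}(g_p)$. Write $\rho(p)=w+r$ accordingly, with $w\in T_p\Sigma$ and $r\in\mathrm{Rad}(g_p)$. For any $v\in T_p\Sigma$, $g_p(r,v)=0$ because $r$ is in the radical, so the orthogonality proved above reduces to $g_p(w,v)=0$ for every $v\in T_p\Sigma$. The non-degeneracy of $g_p$ restricted to $T_p\Sigma$ then forces $w=0$, giving $\rho(p)=r\in \mathrm{Rad}(g_p)$, which is what was required. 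The only real obstacle is the first continuity step — checking that $\omega$ genuinely extends continuously to $\Sigma$ and that smooth sections of $\mathcal{H}$ with prescribed value at $p$ can be produced — but both are routine consequences of the smoothness of $\rho$ and $g$ and the fact that $\mathcal{H}$ is assumed to be a smooth distribution by the very meaning of ``$\Sigma$-adapted''. (If one preferred a less hands-on route, one could instead apply Proposition~\ref{Pr_0} to rescale $\rho$ to a synchronized field $\xi=e^\varphi\rho$, read off $\xi(p)\perp T_p\Sigma$ directly from the defining property $\Sigma_0=\Sigma$ being a leaf of $\xi^\bot$, and then transport the radicality of $\xi$ back to $\rho$ via proportionality.)
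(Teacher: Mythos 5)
Your proof is correct, but it takes a genuinely different route from the paper's. The paper deduces the corollary from the chain of Propositions \ref{Pr_0} and \ref{Pr_1}: it rescales $\rho$ to a synchronized field $\xi=e^{\varphi}\rho$, identifies $\xi$ with the field $G_{\sigma}$ of (\ref{Gsigma}) (which is radical by Proposition \ref{Pr_2}), and transports radicality back through the conformal factor --- essentially the alternative you sketch in your closing parenthesis. Your main argument is instead a direct pointwise one: continuity of $\omega=g(\rho,\cdot)$ and of a section $V$ of the extended distribution $\mathcal{H}$ gives $g_{p}(\rho(p),v)=0$ for all $v\in T_{p}\Sigma$ by passing to the limit from $M_{0}\setminus\Sigma$, and then the splitting $T_{p}M=T_{p}\Sigma\oplus\mathrm{Rad}(g_{p})$ together with positive-definiteness of $g|_{\Sigma}$ kills the tangential component. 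This buys you something real: you use only that $\mathcal{H}$ is a continuous distribution extending $\rho^{\bot}$ with boundary value $T_{p}\Sigma$, and you need neither integrability of $\mathcal{H}$, nor the flow-box constructions, nor the statement of Proposition \ref{Pr_1} (whose converse direction formally presupposes that $\xi$ is already radical, so the paper's own derivation is the more delicate one logically). The one loose end on your side is the nonvanishing clause in the definition of ``radical'': your argument shows $\rho(p)\in\mathrm{Rad}(g_{p})$ but, as you note, does not by itself exclude $\rho(p)=0$ at a point of $\Sigma$; this is harmless here (a vanishing point would destroy the rank-$(m-1)$ extension of $\rho^{\bot}$, and the paper is equally implicit about it), but it deserves the one-line remark you gave it rather than being waved away.
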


\begin{proof}
Let $\rho$ be a $\Sigma$-Adapted field. Around $\Sigma$ we can define
$\rho^{\bot}$ as $\{\Sigma_{t}:\sigma=t\}$, and by Proposition \ref{Pr_0},
take $\xi=e^{\varphi}\rho$ synchronized such that $\xi_{t}(p)\in\Sigma_{t}$.
Then by Proposition \ref{Pr_1}, necessarily $\xi=G_{\sigma}$ which is radical.
Therefore $\rho=e^{-\varphi}\xi$ is a radical field.
\end{proof}

\begin{proposition}
\label{radadap}A radical geodesic field is necessarily a $\Sigma$-Adapted field.
\end{proposition}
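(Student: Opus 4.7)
The plan is to exhibit, in a neighborhood of any $p_{0}\in\Sigma$, an integrable distribution of smooth hypersurfaces realizing $\rho^{\perp}$ off $\Sigma$ and $T\Sigma$ on $\Sigma$; concretely these hypersurfaces will be the level sets of a fractional-power antiderivative of $(g_{mm})^{[1/2]}$. I first apply Theorem~\ref{xm0} to $\rho$, which is transverse to $\Sigma$ because $\rho(p)\in\mathrm{Rad}(g_{p})\not\subset T_{p}\Sigma$. This produces a chart $(x_{i},x_{m})$ on a simple neighborhood $M_{0}$ of $p_{0}$ with $\rho=\partial/\partial x_{m}$ and $\Sigma_{0}=\{x_{m}=0\}$, in which the radical condition reads $g_{am}|_{x_{m}=0}=0$ for every $a$. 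Off $\Sigma_{0}$, the geodesic hypothesis becomes $\Gamma^{i}_{mm}=0$ for $i<m$; eliminating $\Gamma^{m}_{mm}$ from the standard Christoffel-symbol formulas yields the key ODE
\[
\partial_{m}\!\left(\frac{g_{mi}}{\sqrt{|g_{mm}|}}\right)=\partial_{i}\bigl((g_{mm})^{[1/2]}\bigr).
\]

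Next I set $F(x_{i},x_{m}):=\int_{0}^{x_{m}}(g_{mm})^{[1/2]}(x_{i},s)\,ds$, so that $\partial_{m}F=(g_{mm})^{[1/2]}$ and $F|_{\Sigma_{0}}=0$. Integrating the ODE in $x_{m}$ gives $g_{mi}/\sqrt{|g_{mm}|}=C_{i}+\partial_{i}F$ for boundary values $C_{i}$ depending only on $(x_{1},\dots,x_{m-1})$ and possibly different on each side of $\Sigma_{0}$. The decisive claim is that $C_{i}\equiv 0$: were some $C_{i}$ nonzero, the asymptotic $g_{mi}\sim C_{i}\sqrt{|g_{mm}|}$ near $\Sigma_{0}$, substituted into $\det(g_{ab})=\det(g_{ij})\bigl(g_{mm}-g_{mi}g^{ij}g_{jm}\bigr)$, would cause $(\det g_{ab})^{[\alpha]}$ to acquire mismatched leading factors $(1\mp\sum_{ij}C_{i}g^{ij}C_{j})^{\alpha}$ on the two sides of $\Sigma_{0}$, contradicting the smooth extension required by $\alpha$-transversality. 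Hence $g_{mi}=\sqrt{|g_{mm}|}\,\partial_{i}F$ off $\Sigma_{0}$, and consequently
\[
\omega:=g(\rho,\cdot)=g_{mi}\,dx^{i}+g_{mm}\,dx^{m}=\sqrt{|g_{mm}|}\,dF.
\]

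Thus $\rho^{\perp}=\ker\omega=\ker dF$ off $\Sigma_{0}$. I define $\mathcal{H}_{q}$ at every $q\in M_{0}$ to be the tangent hyperplane to the level set of $F$ through $q$; for $q\in\Sigma_{0}=\{F=0\}$ this is $T_{q}\Sigma_{0}$. Smoothness of $\mathcal{H}$ across $\Sigma_{0}$ is obtained by replacing $F$ with the fractional power $F^{[2\alpha/(1+2\alpha)]}$: the Appendix's differentiability theorem, applied with $r=1/(2\alpha)$ in the notation of the introduction, guarantees that this power is a differentiable simple equation of $\Sigma_{0}$ whose level sets coincide with those of $F$. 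Therefore $\mathcal{H}$ is a smooth integrable distribution extending $\rho^{\perp}$ with $\mathcal{H}|_{\Sigma_{0}}=T\Sigma_{0}$, so $\rho$ is $\Sigma$-Adapted. The main technical obstacle is the middle paragraph, where the vanishing of $C_{i}$ and the differentiability of the fractional power both rest on the Appendix's result for signed fractional integrals.
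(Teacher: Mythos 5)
Your chart reduction via Theorem~\ref{xm0} and the ODE $\partial_{m}\bigl(g_{mi}/\sqrt{|g_{mm}|}\bigr)=\partial_{i}\bigl((g_{mm})^{[1/2]}\bigr)$ are correct, and the quantity $g_{mi}/\sqrt{|g_{mm}|}$ is precisely the paper's $\widetilde{g}_{im}$ after arc-length reparameterization, so you have correctly isolated the crux: everything reduces to showing that this quantity tends to $0$ at $\Sigma_{0}$, i.e.\ your $C_{i}=0$. But the justification you give for $C_{i}=0$ does not hold. The $\alpha$-transversality hypothesis constrains only the product $\det(g_{ij})^{\alpha}\,(g_{mm})^{[\alpha]}\,(1\mp Q)^{\alpha}$, where $Q\rightarrow\sum C_{i}g^{ij}C_{j}$; it says nothing about the factor $(1\mp Q)^{\alpha}$ by itself, and since $g_{mm}$ is only assumed continuous at $\Sigma$, its two one-sided leading coefficients at $x_{m}=0$ are free and can absorb the mismatch exactly. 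An explicit instance on $\mathbb{R}^{2}$ with $\alpha=1$:
\[
g=dx^{2}+2b\,dx\,dy+c\,dy^{2},\qquad
c=\begin{cases}\tfrac{4}{3}y, & y\geq0,\\ y, & y<0,\end{cases}\qquad
b=\begin{cases}\tfrac{1}{\sqrt{3}}\,y^{1/2}, & y\geq0,\\ 0, & y<0.\end{cases}
\]
Here $\det(g_{ab})=c-b^{2}=y$ everywhere (a simple equation), $g|_{\Sigma}=dx^{2}$ is Riemannian, the lines $x=\mathrm{const}$ satisfy $\Gamma^{1}_{22}=0$ off $\Sigma$ (one checks $c(2\partial_{y}b-\partial_{x}c)=b\,\partial_{y}c$), and $\rho=\partial/\partial y$ is radical; yet $b/\sqrt{c}\equiv\tfrac12$ on $y>0$, because the mismatched factors $1-\tfrac14$ and $1+0$ are exactly compensated by the slopes $\tfrac43$ and $1$ of $c$ on the two sides. (Correspondingly $\rho^{\perp}=\mathrm{span}(c\,\partial_{x}-b\,\partial_{y})$ limits to $\mathrm{span}(\partial_{y})$ from $y>0$.) So no contradiction with transversality arises, and your decisive claim is not established.

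This is a genuine gap rather than a detail: closing it requires input beyond pregeodesy off $\Sigma$ together with transversality of the determinant --- some control of the integral curves at $\Sigma$ itself. For what it is worth, the paper's own proof passes over exactly the same point: it asserts $\widetilde{g}_{im}(\widetilde{x}_{i},0)=0$ by continuity of the coordinate change, but the Jacobian factor $g_{mm}^{-1/2}$ in $\partial/\partial\widetilde{x}_{m}$ degenerates there, so the radical condition $g_{im}(x_{i},0)=0$ cannot simply be pushed through; your $C_{i}$ is precisely the constant value of $\widetilde{g}_{im}$ along each geodesic. A second, smaller issue: your final appeal to Theorem~\ref{Baldo gen} with $r=1/(2\alpha)$ presupposes that $(g_{mm})^{[1/2]}=|s|^{1/(2\alpha)}\times(\text{smooth positive})$, i.e.\ the normal form of $g_{mm}$; in the paper that form is only obtained downstream of this proposition (via Corollary~\ref{Simple} and Theorem~\ref{Th_2}), so as written this step is circular. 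The paper itself claims only a continuous extension of $\rho^{\perp}$ at the corresponding stage.
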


\begin{proof}
Given the radical geodesic field $\rho$, we will use its flow /as in Theorem
\ref{xm0}) to obtain a chart $(x_{i},x_{m})$ in a simple open set $M_{0}$ with
$\rho=\partial/\partial x_{m}$ and $\Sigma:_{s}x_{m}=0$. Let be%
\[
(g_{ab})=%
\begin{pmatrix}
g_{ij} & g_{im}\\
g_{im} & g_{mm}%
\end{pmatrix}
\]
the metric matrix in these coordinates.

first we work in $M_{0}^{+}:x_{m}>0$ and reparameterize each $\gamma_{p}$ by
arc length. We seek a parameter change $t=t(x_{i}\left(  p\right)
,\widetilde{t})$ such that if $\ \widetilde{\gamma_{p}}(\widetilde{t}%
)=\gamma_{p}(t(x_{i}\left(  p\right)  ,\widetilde{t}))$, then:
\[
g(\gamma_{p}^{\prime}(t),\gamma_{p}^{\prime}(t))=g\left(  \frac{\partial
}{\partial x_{m}},\frac{\partial}{\partial x_{m}}\right)  _{x_{m}=t}%
=g_{mm}(\gamma_{p}(t))
\]
We have:
\[
1=\left\vert \frac{d\widetilde{\gamma_{p}}}{d\widetilde{t}}\right\vert
=\sqrt{g_{mm}(\gamma_{p}(t))}\frac{dt}{d\widetilde{t}}\Rightarrow\widetilde
{t}=\int_{0}^{t}\sqrt{g_{mm}(\gamma_{p}(t))}dt
\]
We then, take coordinate:
\begin{equation}
\left\{
\begin{array}
[c]{l}%
\widetilde{x}_{i}=x_{i}\\
\widetilde{x}_{m}=\int_{0}^{t}\sqrt{g_{mm}\left(  x_{i},x_{m}\right)  }dx_{m}%
\end{array}
\right.  \label{xtilde}%
\end{equation}
In these coordinates, the matrix of $g$ is $%
\begin{pmatrix}
\widetilde{g}_{ij} & \widetilde{g}_{im}\\
\widetilde{g}_{im} & 1
\end{pmatrix}
$ Now the curves $\widetilde{\gamma}_{p}$ are geodesics. Then $\widetilde
{g}_{im}=0$ because:
\begin{align*}
\frac{\partial\widetilde{g}_{im}}{\partial\widetilde{x}^{m}} &  =\frac
{\partial}{\partial\widetilde{x}_{i}}\widetilde{g}\left(  \frac{\partial
}{\partial\widetilde{x}_{i}},\frac{\partial}{\partial\widetilde{x}_{m}%
}\right)  =g\left(  \widetilde{\nabla}_{\frac{\partial}{\partial\widetilde
{x}_{m}}}\frac{\partial}{\partial\widetilde{x}_{i}},\frac{\partial}%
{\partial\widetilde{x}_{m}}\right)  =g\left(  \widetilde{\nabla}%
_{\frac{\partial}{\partial\widetilde{x}_{i}}}\frac{\partial}{\partial
\widetilde{x}_{m}},\frac{\partial}{\partial\widetilde{x}_{m}}\right)  \\
&  =\frac{1}{2}\frac{\partial}{\partial\widetilde{x}_{m}}g\left(
\frac{\partial}{\partial\widetilde{x}_{m}},\frac{\partial}{\partial
\widetilde{x}_{m}}\right)  =\frac{1}{2}\frac{\partial}{\partial\widetilde
{x}_{m}}\left(  1\right)  =0
\end{align*}
Thus, the function $\widetilde{x}_{m}\mapsto\widetilde{g}_{im}\left(
x_{i},x_{m}\right)  $ is constant. Since the coordinate change \ref{xtilde} is
continuous at $x_{m}\geq0$ and $\widetilde{g}_{im}(\widetilde{x}_{i},0)=0$, we
conclude $\widetilde{g}_{im}=0$.

A similar reasoning applies to the zone $x_{m}<0$. Denote $\gamma_{p_{0}%
}(t)=\rho_{t}(p_{0})$ for $p_{0}\in\Sigma_{0}$, we define $\sigma
:M_{0}\rightarrow\mathbb{R}$ as the length application:
\[
\sigma(p)=%
\begin{cases}
\text{\textrm{len}}(\gamma_{p_{0}}|_{[0,x_{m}\left(  p\right)  ]}) & \text{if
}x_{m}\left(  p\right)  \geq0\\
-\text{\textrm{len}}(\gamma_{p_{0}}|_{[0,x_{m}\left(  p\right)  ]}) & \text{if
}x_{m}\left(  p\right)  <0
\end{cases}
\]
This is a continuous map where $\Sigma_{0}:\sigma=0$. The level surfaces
$\Sigma_{s}:\sigma=s$ form a continuous distribution of hyperplanes that
coincide for $s\neq0$ with $\rho^{\perp}$). Consequently, it is concluded that
$\Sigma_{0}\in\rho^{\perp}$, $\rho$ is $\Sigma$-Adapted.
\end{proof}

\begin{corollary}
\label{Simple}If $\rho$ is a $\Sigma$-Adapted field, then $\langle\rho
,\rho\rangle^{\lbrack\alpha]}=0$ is a simple equation for $\Sigma$.
\end{corollary}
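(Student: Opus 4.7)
The plan is to pass to a synchronized rescaling of $\rho$, introduce the special coordinates supplied by Proposition \ref{Pr_2}, and then carry out a short algebraic computation with the $[\alpha]$-bracket.

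First I would shrink to a simple neighborhood $M_{0}$ of an arbitrary $p\in\Sigma$. By Corollary \ref{Cor1} the $\Sigma$-adapted field $\rho$ is radical. Applying Proposition \ref{Pr_0} I obtain a synchronized field $\xi=e^{\varphi}\rho$ with the same integral lines as $\rho$; as $\rho$ is radical so is $\xi$. Proposition \ref{Pr_1} then provides a differentiable $\sigma:M_{0}\rightarrow\mathbb{R}$ with $\Sigma:_{s}\sigma=0$ and $\xi=G_{\sigma}$. Proposition \ref{Pr_2} furnishes special coordinates $(x_{i},x_{m})$ in which $\xi=\partial/\partial x_{m}$, $\Sigma:x_{m}=0$, and the metric has the block form of (\ref{ESP}); in particular $g_{mm}=\hbar\,x_{m}^{[1/\alpha]}$ with $\hbar$ smooth and nowhere zero.

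Now the computation. Since $\rho=e^{-\varphi}\xi$ and $\xi=\partial/\partial x_{m}$, one has $\langle\rho,\rho\rangle=e^{-2\varphi}\,\hbar\,x_{m}^{[1/\alpha]}$. Using the algebraic identities for the bracket collected after (\ref{signo})--namely $(st)^{[\alpha]}=s^{[\alpha]}t^{[\alpha]}$ and $(t^{[\beta]})^{[\alpha]}=t^{[\alpha\beta]}$--I would then conclude
\[
\langle\rho,\rho\rangle^{[\alpha]} \;=\; e^{-2\alpha\varphi}\,\hbar^{[\alpha]}\,x_{m}.
\]
The prefactor $e^{-2\alpha\varphi}\,\hbar^{[\alpha]}$ is smooth on $M_{0}$ (since $\hbar$ has constant sign on each connected component and never vanishes, $|\hbar|^{\alpha}$ is smooth) and nowhere zero. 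Invoking Proposition \ref{htau} with the simple equation $x_{m}=0$ and the non-vanishing factor $h=e^{-2\alpha\varphi}\hbar^{[\alpha]}$, this exhibits $\langle\rho,\rho\rangle^{[\alpha]}=0$ as a simple equation for $\Sigma$ in $M_{0}$.

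The only delicate point is justifying that the $[\alpha]$-operation distributes cleanly over the three factors $e^{-2\varphi}$, $\hbar$, and $x_{m}^{[1/\alpha]}$; but this is exactly what the multiplicative identities listed after (\ref{signo}) assert, so no serious obstacle is expected. The payoff of this lemma is conceptual: it shows that the $\alpha$-transversality condition, a priori stated in terms of $\det(g_{ab})^{[\alpha]}$, can be tested intrinsically against any $\Sigma$-adapted field via $\langle\rho,\rho\rangle^{[\alpha]}$.
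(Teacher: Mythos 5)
Your proposal is correct and follows essentially the same route as the paper: Proposition \ref{Pr_0} to synchronize, Proposition \ref{Pr_1} to get $\sigma$ with $\xi=G_{\sigma}$, Proposition \ref{Pr_2} to get special coordinates with $\xi=\partial/\partial x_{m}$, and then an algebraic manipulation with the $[\alpha]$-bracket. The only cosmetic difference is that you read off $g_{mm}=\hbar\,x_{m}^{[1/\alpha]}$ from the special-coordinate form (\ref{ESP}) and conclude via Proposition \ref{htau}, whereas the paper factors $\det(g_{ab})^{[\alpha]}=\langle\xi,\xi\rangle^{[\alpha]}\det(g_{ij})^{\alpha}$ and invokes $\alpha$-transversality directly; the two computations are equivalent.
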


\begin{proof}
By Proposition \ref{Pr_0} we construct synchronized $\xi=e^{\varphi}\rho$, and
by Proposition \ref{Pr_1} there exists $\sigma$ such that $\xi=G_{\sigma}$.
Using Proposition \ref{Pr_2}, we construct special coordinates $(x_{i},x_{m})$
where $\xi=\partial/\partial x_{m}$. By $\alpha$-transversality, $\det
(g_{ab})^{[\alpha]}=0$ is a simple equation for $\Sigma$. Since $\det(g_{ij})$
is positive, $\det(g_{ab})^{[\alpha]}=\langle\xi,\xi\rangle^{\lbrack\alpha
]}\det(g_{ij})^{\alpha}$Thus $\langle\xi,\xi\rangle^{\lbrack\alpha]}=0$ is a
simple equation for $\Sigma$, and so is $\langle\rho,\rho\rangle
^{\lbrack\alpha]}=0$.
\end{proof}

\begin{theorem}
[main]\label{Th_2} If $(x_{i},x_{m})$ are normal coordinates, then
$\partial/\partial x_{m}$ is a $\Sigma$-Adapted geodesic field Conversely, if
$\rho$ is a radical geodesic field, then around each singular point there
exist normal coordinates $(x_{i},x_{m})$ with $\rho=\hbar\partial/\partial
x_{m}$, where $\hbar$ is never null..
\end{theorem}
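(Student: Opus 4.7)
The forward implication is a direct calculation in the given chart. Since $g_{im}=0$ and $g_{mm}=\pm x_m^{[1/\alpha]}$ depends only on $x_m$, the orthogonal distribution $(\partial/\partial x_m)^{\perp}$ equals $\mathrm{span}(\partial/\partial x_i)$ wherever $x_m\neq 0$ and extends to $T\Sigma$ on $\Sigma_0$, so $\partial/\partial x_m$ is $\Sigma$-Adapted. Moreover $\Gamma^{i}_{mm}=-\tfrac12 g^{ij}\partial_j g_{mm}=0$, and the only remaining $\Gamma^{m}_{mm}$ merely reparameterizes the curves $\{x_i=\mathrm{const}\}$ affinely, so these curves are geodesic lines and $\partial/\partial x_m$ is a $\Sigma$-Adapted geodesic field.

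For the converse, let $\rho$ be a radical geodesic field. By Proposition \ref{radadap} $\rho$ is $\Sigma$-Adapted, and Theorem \ref{xm0} supplies a chart $(x_i,x_m)$ on a simple neighborhood $M_0$ of the singular point with $\rho=\partial/\partial x_m$ and $\Sigma_0:x_m=0$. The argument in the proof of Proposition \ref{radadap}, applied separately on each side $M_0^{\pm}=\{\pm x_m>0\}$, reparameterizes each $\rho$-line by arc length to produce a coordinate $\widetilde{x}_m$ (smooth on each side only) in which the metric takes the form $\sum\widetilde{g}_{ij}\,dx_i\,dx_j+\eta\, d\widetilde{x}_m^2$ with $\eta=\pm 1$ and $\widetilde{g}_{im}=0$. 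The plan is to absorb the non-smoothness of $\widetilde{x}_m$ into a further reparameterization $\widetilde{x}_m\mapsto y_m$ which is a pure function of $\widetilde{x}_m$ on each side (hence preserves $\widetilde{g}_{im}=0$), while forcing $\widetilde{g}_{mm}$ into the form $y_m^{[1/\alpha]}$ and making $y_m$ smooth across $\Sigma$.

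Integrating the ODE $\eta\,(d\widetilde{x}_m/dy_m)^2=y_m^{[1/\alpha]}$ yields, up to a nonzero constant, $y_m=\epsilon(\widetilde{x}_m)|\widetilde{x}_m|^{2\alpha/(2\alpha+1)}$. Corollary \ref{Simple} applied to $\partial/\partial x_m$ together with the factorization of Proposition \ref{htau} gives $g_{mm}=h^{[1/\alpha]}x_m^{[1/\alpha]}$, so $|g_{mm}|=|x_m|^{1/\alpha}\psi^2$ for a smooth, strictly positive $\psi$. Substituting the definition of $\widetilde{x}_m$ then expresses $y_m$ as
\[
y_m=F(x_i,x_m):=\epsilon(x_m)\Bigl|\int_0^{x_m}|s|^{1/(2\alpha)}\psi(x_i,s)\,ds\Bigr|^{2\alpha/(2\alpha+1)}
\]
(up to a positive multiplicative constant). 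The principal obstacle is the differentiability of $F$ across $\Sigma$, and this is precisely what the Appendix's main technical lemma establishes with $r=1/(2\alpha)$. Granting it, a leading-order estimate of the integral shows $F_{x_m}|_{\Sigma}$ is a nonzero multiple of $\psi(x_i,0)$, so $(x_i,x_m)\mapsto(x_i,y_m)$ is a smooth diffeomorphism on $M_0$. In the new chart $\widetilde{g}_{im}=0$ and $\widetilde{g}_{mm}=y_m^{[1/\alpha]}$ hold on each side by construction and extend across $\Sigma$ by continuity of $g$; thus the chart is normal. Finally $\rho=\partial/\partial x_m=F_{x_m}\,\partial/\partial y_m=\hbar\,\partial/\partial y_m$ with $\hbar:=F_{x_m}$ nowhere vanishing, as required.
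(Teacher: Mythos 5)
Your proof is correct and follows essentially the same route as the paper's: the converse rests on Proposition \ref{radadap}, Theorem \ref{xm0}, Corollary \ref{Simple} with the factorization $g_{mm}^{[\alpha]}=h\,x_{m}$, the reparameterization $y_{m}=\epsilon(x_{m})\bigl|\int_{0}^{x_{m}}|s|^{1/(2\alpha)}\psi(x_{i},s)\,ds\bigr|^{2\alpha/(2\alpha+1)}$ whose smoothness and nonvanishing $t$-derivative are exactly Baldomero's theorem with $r=1/(2\alpha)$, and the side-by-side arc-length argument to kill $g_{im}$. The only differences are cosmetic: you establish $\widetilde{g}_{im}=0$ first and then adjust $g_{mm}$, whereas the paper performs the Baldomero change first and uses the arc-length coordinate only as an auxiliary device to show $\overline{g}_{im}=0$ (the composite coordinate change being identical), and you spell out via the Christoffel symbols the forward direction that the paper dismisses as evident.
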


\begin{proof}
The first statement is evident. For the converse, we first observe that in a
normal chart the curves $\gamma_{p}$ defined by:
\[
\gamma_{p}:\left\{
\begin{tabular}
[c]{l}%
$x=x_{i}\left(  p\right)  $\\
$x_{m}=s$%
\end{tabular}
\ \ \ \right.
\]
for $p\in\Sigma$, satisfy
\[
\left.  \left\langle \frac{d\gamma_{p}}{ds},\frac{d\gamma_{p}}{ds}%
\right\rangle \right\vert _{s}=\left.  \left\langle \frac{\partial}{\partial
x_{m}},\frac{\partial}{\partial x_{m}}\right\rangle \right\vert _{x_{m}%
=s}=s^{[1/\alpha]}%
\]
because of this, they are said to be $\alpha$-parameterized.

Given the radical geodesic field $\rho$, we will use its flow (see Theorem
\ref{xm0}) to obtain a chart $(x_{i},x_{m})$ in a simple open set $M_{0}$ with
$\rho=\partial/\partial x_{m}$ and $\Sigma:_{s}x_{m}=0$. Let be%
\[
(g_{ab})=%
\begin{pmatrix}
g_{ij} & g_{im}\\
g_{im} & g_{mm}%
\end{pmatrix}
\]
the metric matrix in these coordinates. By Proposition \ref{radadap} $\rho$ is
$\Sigma$-Adapted, and by corollary \ref{Simple} $\langle\rho,\rho
\rangle^{\lbrack\alpha]}=0$ is a simple equation for $\Sigma$.

We are going to $\alpha$-parameterize the curves $\gamma_{p}=\rho_{t}(p)$,
which are defined by $\gamma_{p}:x=x_{i}(p)$, $x_{m}=t$.

If $\gamma(t)$ is an integral curve of $\rho$, with $\gamma\left(  0\right)
\in\Sigma$, then since $\Sigma:_{s}\langle\rho,\rho\rangle^{\lbrack\alpha]}%
=0$, the function $\varphi=\langle\frac{d\gamma}{dt},\frac{d\gamma}{dt}%
\rangle^{\lbrack\alpha]}$ defines a simple equation $\varphi(t)=0$ of
$\{0\}\subset\mathbb{R}$ Thus $\varphi(t)=t\psi(t)^{[2\alpha]}$ for a
differentiable function $\psi$ with $\psi(0)>0$.
\[
\varphi\left(  t\right)  =t\psi\left(  t\right)  ^{\left[  2\alpha\right]
}\Rightarrow\left\langle \frac{d\gamma}{dt},\frac{d\gamma}{dt}\right\rangle
=\left[  t^{\frac{1}{2\alpha}}\psi\left(  t\right)  \right]  ^{2}%
\]

Baldomero's Theorem allows us to $\alpha$-parameterize $\gamma(t)$. We seek
$t=t(s)$ such that if $\overline{\gamma}\left(  s\right)  =\gamma\left(
t\left(  s\right)  \right)  $ we have
\begin{align*}
s^{\left[  1/\alpha\right]  } &  =\left\langle \frac{d\overline{\gamma}}%
{ds},\frac{d\overline{\gamma}}{ds}\right\rangle =\left(  \frac{dt}{ds}\right)
^{2}\left\langle \frac{d\gamma}{dt},\frac{d\gamma}{dt}\right\rangle \\
&  =\left(  \frac{dt}{ds}\right)  ^{2}\left[  t^{\left[  \frac{1}{2\alpha
}\right]  }\psi\left(  t\right)  \right]  ^{2}%
\end{align*}

therefore
\[
s^{\left[  \frac{1}{2\alpha}\right]  }=\frac{dt}{ds}t^{\left[  \frac
{1}{2\alpha}\right]  }\psi\left(  t\right)  \Rightarrow\frac{s^{1+\frac
{1}{2\alpha}}}{\frac{1}{2\alpha}+1}=\epsilon\int_{0}^{t}\tau^{\left[  \frac
{1}{2\alpha}\right]  }\psi\left(  \tau\right)  d\tau
\]%
\[
s\left(  t\right)  =\left(  \left(  1+\frac{1}{2\alpha}\right)  \int_{0}%
^{t}\left(  \epsilon\tau\right)  ^{\left[  \frac{1}{2\alpha}\right]  }%
\psi\left(  \tau\right)  d\tau\right)  ^{\left[  1+\frac{1}{2\alpha}\right]  }%
\]
which it is smooth by Baldomero%
\'{}%
s theorem for $r=\frac{1}{2\alpha}.$

In general taking $\gamma=\gamma_{p}:\left\{
\begin{tabular}
[c]{l}%
$x_{i}=x_{i}\left(  p\right)  $\\
$x_{m}=t$%
\end{tabular}
\ \ \ \ \ \ \right.  $ $x_{m}\left(  p\right)  =0$, we make the functions
$\psi\left(  x_{i},t\right)  $ such that%
\[
\left\langle \frac{d\gamma_{p}}{dt},\frac{d\gamma_{p}}{dt}\right\rangle
=\left[  t^{\left[  \frac{1}{2\alpha}\right]  }\psi\left(  x_{i}\left(
p\right)  ,t\right)  \right]  ^{2}%
\]
and the smooth function
\[
s\left(  x_{i},t\right)  =\left(  \left(  1+\frac{1}{2\alpha}\right)  \int
_{0}^{t}\left(  \epsilon\tau\right)  ^{\left[  \frac{1}{2\alpha}\right]  }%
\psi\left(  x_{i},\tau\right)  d\tau\right)  ^{\left[  1+\frac{1}{2\alpha
}\right]  }%
\]

defines the reparametrization $s=s\left(  x_{i}\left(  p\right)  ,t\right)  $
that $\alpha$-parameterizes $\gamma_{p}$, that is, if $\overline{\gamma}%
_{p}=\overline{\gamma}_{p}\left(  s\right)  $ is the reparametrized one such
that
\[
s^{\left[  1/\alpha\right]  }=\left\langle \frac{d\overline{\gamma}_{p}}%
{ds},\frac{d\overline{\gamma}_{p}}{ds}\right\rangle
\]
and we can write $\overline{\gamma}_{p}=\overline{\gamma}_{p}\left(  s\right)
:\left\{
\begin{tabular}
[c]{l}%
$\overline{x}_{i}=\overline{x}_{i}\left(  p\right)  $\\
$\overline{x}_{m}=s$%
\end{tabular}
\ \ \ \ \ \ \ \right.  $ in the coordinates given by%
\[
\left\{
\begin{tabular}
[c]{l}%
$\overline{x}_{i}=x_{i}$\\
$\overline{x}_{m}=\left(  \left(  1+\frac{1}{2\alpha}\right)  \int_{0}^{x_{m}%
}\left(  \epsilon\tau\right)  ^{\left[  \frac{1}{2\alpha}\right]  }\psi\left(
x_{i},\tau\right)  d\tau\right)  ^{\left[  1+\frac{1}{2\alpha}\right]  }$%
\end{tabular}
\ \ \ \ \ \right.
\]

At these coordinates $\left(  \overline{x}_{i},\overline{x}_{m}\right)  $ the
metric matrix is
\[
\left(  \overline{g}_{ab}\right)  =\left(
\begin{array}
[c]{cc}%
\overline{g}_{ij} & \overline{g}_{im}\\
\overline{g}_{im} & \overline{x}_{m}^{\left[  1/\alpha\right]  }%
\end{array}
\right)
\]
To prove that $\overline{g}_{im}$ is zero , we will proceed as in
(\ref{xtilde}). We define the continuous change of coordinates, $t=t\left(
s\right)  $ differentiable at $x_{m}>0$ that makes geodesics $\widetilde
{\gamma}_{p}=\overline{\gamma}_{p}\left(  t\left(  s\right)  \right)  $ to the
curves $\overline{\gamma}_{p}$. We have for $\alpha\neq1/2$
\begin{align*}
1 &  =\left\langle \frac{d\widetilde{\gamma}_{p}}{ds},\frac{d\widetilde
{\gamma}_{p}}{ds}\right\rangle =\left(  \frac{ds}{dt}\right)  ^{2}t^{\left[
1/\alpha\right]  }\Rightarrow\\
ds &  =t^{\left[  -1/\left(  2\alpha\right)  \right]  }dt\Rightarrow
s=\epsilon\left(  t\right)  \left\vert t\right\vert ^{\left(  2\alpha
-1\right)  /2\alpha}%
\end{align*}

This means that in the coordinates
\[
\left\{
\begin{array}
[c]{l}%
\widetilde{x}_{i}=\overline{x}_{i}\\
\widetilde{x}_{m}=\epsilon\left(  \overline{x}_{m}\right)  \left\vert
\overline{x}_{m}\right\vert ^{\left(  2\alpha-1\right)  /2\alpha}%
\end{array}
\right.
\]
the metric matrix is
\[
\left(  \widetilde{g}_{ab}\right)  =\left(
\begin{array}
[c]{cc}%
\widetilde{g}_{ij} & 0\\
0 & 1
\end{array}
\right)
\]

But now it turns out that%
\[
\frac{\partial}{\partial\overline{x}_{i}}=\frac{\partial}{\partial
\widetilde{x}_{i}},\frac{\partial}{\partial\overline{x}_{m}}=\frac
{\partial\widetilde{x}_{m}}{\partial\overline{x}_{m}}\frac{\partial}%
{\partial\widetilde{x}_{m}}=\overline{x}_{m}^{^{\left[  -1/2\alpha\right]  }%
}\frac{\partial}{\partial\widetilde{x}_{m}}%
\]
therefore
\[
\overline{g}_{im}=\left\langle \frac{\partial}{\partial\overline{x}_{i}}%
,\frac{\partial}{\partial\overline{x}_{m}}\right\rangle =\overline{x}%
_{m}^{^{\left[  -1/2\alpha\right]  }}\left\langle \frac{\partial}%
{\partial\widetilde{x}_{i}},\frac{\partial}{\partial\widetilde{x}_{m}%
}\right\rangle =0
\]

For $\alpha=1/2$ the change of coordinates would be%
\[
\left\{
\begin{array}
[c]{l}%
\widetilde{x}_{i}=\overline{x}_{i}\\
\widetilde{x}_{m}=\epsilon\left(  \overline{x}_{m}\right)  \ln\left\vert
\overline{x}_{m}\right\vert
\end{array}
\right.
\]
which unfortunately is not a continuous change of coordinates, but
nevertheless at $x_{m}\neq0$ it still happens that%
\[
\frac{\partial}{\partial\overline{x}_{i}}=\frac{\partial}{\partial
\widetilde{x}_{i}},\frac{\partial}{\partial\overline{x}_{m}}=\frac
{\partial\widetilde{x}_{m}}{\partial\overline{x}_{m}}\frac{\partial}%
{\partial\widetilde{x}_{m}}=\frac{\epsilon\left(  \overline{x}_{m}\right)
}{\overline{x}_{m}}\frac{\partial}{\partial\widetilde{x}_{m}}%
\]
therefore
\[
\overline{g}_{im}=\left\langle \frac{\partial}{\partial\overline{x}_{i}}%
,\frac{\partial}{\partial\overline{x}_{m}}\right\rangle =\frac{\epsilon\left(
\overline{x}_{m}\right)  }{\overline{x}_{m}}\left\langle \frac{\partial
}{\partial\widetilde{x}_{i}},\frac{\partial}{\partial\widetilde{x}_{m}%
}\right\rangle =0
\]
and by continuity is also $\overline{g}_{im}\left(  x_{i},0\right)  =0.$
\end{proof}

\section{Conclussion.}

We have constructed a one-parameter family, called the $\Sigma^{\alpha}%
$-spaces, which are models of metric manifolds $(M,g)$ such that they change
the signature from Lorentz to Riemann, when traversing a hypersurface $\Sigma$
called a singular. These spaces are of geometric interest in themselves but
they also have applications in cosmology.

Our $\Sigma^{\alpha}$-spaces spaces ($\alpha>0$) are characterized by inducing
a Riemannian metric on $\Sigma$, and that around each point $p\in\Sigma$,
there exists a local expression of $g=\Sigma g_{ab}dx_{a}dx_{b}$ such that the
function $sign\left(  \det\left(  g_{ab}\right)  \right)  \left\vert
\det\left(  g_{ab}\right)  \right\vert ^{\alpha}$ extends differently, and
their differential does not vanish at $p$. When $\alpha=1$ and $g$ is smooth,
we obtain the type-change model proposed by Kossowski in the foundational
paper \cite{kossow1} where it is shown that these metrics admit a local
representation of the form $g=\Sigma g_{ij}dx_{i}dx_{j}-x_{m}dx_{m}^{2}$
around each singular point. But it is used without proof, that for $r=1/2$ the
function $F\left(  \lambda,t\right)  =sgn\left(  t\right)  \left(  \int
_{0}^{t}\left\vert x\right\vert ^{r}\psi\left(  \lambda,x\right)  dx\right)
^{\frac{1}{r+1}}$ is differentiable and this is a far from trivial fact. We
have proven it for all $r>-1$, and this has motivated the definition of the
family of $\Sigma^{\alpha}$-spaces, and has allowed us to prove that in the
presence of a geodesic radical line distribution, there is a local expression
$g=\Sigma g_{ij}dx_{i}dx_{j}-sign\left(  x_{m}\right)  \left\vert
x_{m}\right\vert ^{1/\alpha}dx_{m}^{2}$ around each singular point.

\section{Discussion.}

\begin{enumerate}
\item The metric of a $\Sigma^{\alpha}$-space $\left(  M,g\right)  $ does not
have to be differentiable on all of $M$. For example, on$M=\mathbb{R}^{2}$
with coordinates $\left(  x,y\right)  $ and metric matrix
\[
\left(  g_{ab}\right)  =\left(
\begin{array}
[c]{cc}%
1 & y^{\left[  1/2\right]  }\\
y^{\left[  1/2\right]  } & 2y
\end{array}
\right)
\]
$\det\left(  g_{ab}\right)  =y$ and it is a $\Sigma^{1}$-space, but obviously
the differentiability of the metric fails at each point of the singular line
$\Sigma:y=0$. Therefore, this $\Sigma^{1}$--space is not of the Kossowski type
\cite{kossow1}. In fact, it cannot admit a distribution of radical geodesic
lines, because if it did, by our main theorem \ref{Th_2} there would be a
coordinate system $\left(  x_{1},x_{2}\right)  $ where the metric have a
matrix as $\left(
\begin{array}
[c]{cc}%
1 & 0\\
0 & x_{2}%
\end{array}
\right)  $, and would be differentiable at the points of $\Sigma:x_{2}=0$.
This proves that not every $\Sigma^{\alpha}$-espacio admits a distribution of
radical geodesic lines.

\item The metric $g$ on $\mathbb{R}^{m}=\left\{  \left(  x_{i},x_{m}\right)
\right\}  $ with matrix%
\begin{equation}
\left(  g_{ab}\right)  =\left(
\begin{array}
[c]{cc}%
g_{ij} & 0\\
0 & \hbar x_{m}^{\left[  1/\alpha\right]  }%
\end{array}
\right)  \text{, }%
\begin{tabular}
[c]{l}%
$\hbar$ smooth\\
and nonnull.
\end{tabular}
\ \label{esp}%
\end{equation}
define a $\Sigma^{\alpha}$-space, and the field $\rho=\partial/\partial x_{m}$
turns out to be a $\Sigma$-radical field. It can be shown that conversely, if
$\left(  M,g\right)  $ is a $\Sigma^{\alpha}$-space that admits a $\Sigma
$-radical field, it also admits local representations of the metric as in
(\ref{esp}) around $\Sigma$. We ask whether, in this type of $\Sigma^{\alpha}%
$-spaces, the existence of a distribution of radical geodesic lines around the
singular hypersurface can be guaranteed, and we might ask whether this
distribution is essentially unique.

\item There is a natural definition of $\Sigma^{\alpha}$-space for $\alpha<0$,
$\alpha\neq-1/2$; it consists of Lorentz or Riemann metrics $g$ defined on the
components of the complement $M\backslash\Sigma$ of the hypersurface $\Sigma$
such that their dual metric $g^{\ast}$ extends continuously over all of $M$
and satisfies a property of $\alpha$-transversality on every point of $\Sigma
$. Note that in this case the metric $g$ is not defined on $\Sigma$ now its
points become poles of the metric and $\Sigma$ is called polar hypersurface of
$\left(  M,g\right)  $. This is, in a sense, a dual situation to the case
$\alpha>0$; however, interpreting and dualizing the results is far from trivial.

See \cite{Laf} for a study of this topic for the case $\alpha=-1$.\newpage
\end{enumerate}

\section{APPENDIX: Baldomero's Theorem.}

Recall the definition of the sign function:
\[
\epsilon(t)=\left\{
\begin{array}
[c]{l}%
+1\text{ if }t>0\\
0\text{ if }t=0\\
-1\text{ if }t<0
\end{array}
\right.
\]

Fixed a real number $\alpha$ with $r\neq-1$, it is verified that%
\[
\int_{0}^{t}x^{r}dx=\frac{t^{r+1}}{r+1}%
\]
and therefore the function $F:\left]  0,\infty\right)  \rightarrow\mathbb{R}$
defined by
\[
F\left(  t\right)  =\left(  \int_{0}^{t}x^{r}dx\right)  ^{\frac{1}{r+1}}%
=\frac{t}{r+1}%
\]
is $C^{\infty}$ differentiable on $\left[  0,\infty\right)  $ by taking
$F\left(  0\right)  =0$.

Furthermore, for $t<0$, we have%
\begin{align*}
\int_{0}^{t}\left(  -x\right)  ^{r}dx  &  =\left[  -\frac{\left(  -x\right)
^{r+1}}{r+1}\right]  _{0}^{t}\\
&  =-\frac{\left(  -t\right)  ^{r+1}}{r+1}%
\end{align*}
hence for $t<0$
\[
\left(  \epsilon\left(  t\right)  \int_{0}^{t}\left\vert x\right\vert
^{r}dx\right)  ^{\frac{1}{r+1}}=\frac{-t}{r+1}%
\]
so the function $F:\mathbb{R}\rightarrow\mathbb{R}$ defined by%
\[
F\left(  t\right)  =\epsilon\left(  t\right)  \left(  \epsilon\left(
t\right)  \int_{0}^{t}\left\vert x\right\vert ^{r}dx\right)  ^{\frac{1}{r+1}%
}=\frac{t}{r+1}%
\]
is $C^{\infty}$ differentiable. Inspired by this, we conjecture the validity
of the following statement:

\begin{theorem}
[Baldomero]\label{Baldo gen} Assume that $\psi=\psi\left(  \lambda,t\right)
:\Lambda\times\mathcal{I}\rightarrow\mathbb{R}$ is a $C^{q}$ function
($q=1,2,...\infty$) defined on an open interval $\mathcal{I}$ of $\mathbb{R}$,
with $0\in\mathcal{I}$ and $\psi\left(  0\right)  >0$. Fixed a real number $r$
with $r\neq-1$, the function $F:\mathcal{I}\rightarrow\mathbb{R}$ defined by
\begin{equation}
F:t\mapsto F\left(  \lambda,t\right)  =\epsilon\left(  t\right)  \left\vert
\int_{0}^{t}\left\vert x\right\vert ^{r}\psi\left(  \lambda,x\right)
dx\right\vert ^{\frac{1}{r+1}} \label{Ft}%
\end{equation}
is a $C^{q}$ differentiable function, and if $q\geq1$, then $F^{\prime}\left(
0\right)  >0$.
\end{theorem}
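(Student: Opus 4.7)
The plan is to reduce the apparent singularity at $t=0$ via the substitution $x = t\tau$ inside the integral. This factors the integrand as $|t|^{r+1}$ times a manifestly smooth function of $(\lambda,t)$, and the outer exponent $1/(r+1)$ then cancels the $|t|^{r+1}$, leaving only a smooth factor of $t$ multiplying a smooth positive factor. In particular this makes the nonvanishing of $F'(0)$ immediate.

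Concretely, I would introduce
$$H(\lambda,t) \;=\; \int_0^1 \tau^r\, \psi(\lambda, t\tau)\, d\tau,$$
and note that $H(\lambda,0) = \psi(\lambda,0)/(r+1)$. Since $\psi(0) > 0$, by continuity $H$ is strictly positive on a neighborhood of the origin in $\Lambda \times \mathcal{I}$. The substitution $x = t\tau$ (valid for $t$ of either sign, since $dx = t\,d\tau$ automatically carries the orientation) yields
$$\int_0^t |x|^r \psi(\lambda, x)\, dx \;=\; t\,|t|^r\, H(\lambda, t) \;=\; \epsilon(t)\, |t|^{r+1}\, H(\lambda, t).$$
In the region $H > 0$ this gives $\bigl|\int_0^t |x|^r \psi\,dx\bigr| = |t|^{r+1} H(\lambda,t)$, so
$$F(\lambda,t) \;=\; \epsilon(t)\, \bigl(|t|^{r+1}\, H(\lambda, t)\bigr)^{1/(r+1)} \;=\; \epsilon(t)\,|t|\,H(\lambda,t)^{1/(r+1)} \;=\; t\cdot H(\lambda,t)^{1/(r+1)}.$$

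It then remains to verify $H \in C^q$ on a neighborhood of the origin. Because $r > -1$, $\tau^r$ is integrable on $[0,1]$; differentiating $j$ times in $\lambda$ and $k$ times in $t$ under the integral sign produces integrands $\tau^{r+k}\,\partial_\lambda^j \partial_t^k \psi(\lambda, t\tau)$, which remain integrable in $\tau$ and jointly continuous in $(\lambda, t)$ whenever $j+k \le q$. The standard Leibniz rule under the integral sign therefore gives $H \in C^q$. Since $H(0,0) > 0$, $H^{1/(r+1)}$ is likewise $C^q$ near the origin, and multiplying by the $C^\infty$ factor $t$ yields $F \in C^q$. Evaluating the partial derivative in $t$ at $t=0$ gives $\partial_t F(\lambda, 0) = H(\lambda, 0)^{1/(r+1)} = \bigl(\psi(\lambda, 0)/(r+1)\bigr)^{1/(r+1)} > 0$, as required.

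The only genuine subtlety — and the reason the statement looks harder than it is — is the bookkeeping of signs needed to establish $\int_0^t |x|^r \psi\,dx = \epsilon(t)|t|^{r+1} H(\lambda, t)$ uniformly for $t$ positive, negative, or zero. Once this factorization is in hand, the outer $\epsilon(t)$ in the definition of $F$ cancels exactly against the sign that emerges from extracting $|t|$ from the $1/(r+1)$-root, and what remains is the product of a smooth function of $t$ with a smooth positive power of $H$; all the delicate $t \to 0$ behavior has been absorbed into the single linear factor $t$.
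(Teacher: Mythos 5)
Your proof is correct, and it takes a genuinely different and considerably more elementary route than the paper's. The whole argument rests on the substitution $x=t\tau$, which gives the identity $\int_0^t|x|^r\psi(\lambda,x)\,dx=\epsilon(t)\,|t|^{r+1}H(\lambda,t)$ with $H(\lambda,t)=\int_0^1\tau^r\psi(\lambda,t\tau)\,d\tau$, valid for either sign of $t$; since $H(\lambda,0)=\psi(\lambda,0)/(r+1)>0$, the outer root collapses and $F=t\,H^{1/(r+1)}$ near the origin. The regularity of $H$ is then a routine application of differentiation under the integral sign: every formal derivative $\int_0^1\tau^{r+k}\,\partial_\lambda^j\partial_t^k\psi(\lambda,t\tau)\,d\tau$ has an integrand dominated locally uniformly by a constant multiple of the integrable function $\tau^{r+k}$, where $r>-1$ is the operative hypothesis (as it must be for the original integral to converge at all, and as the paper itself states in its conclusion). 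The paper instead keeps the integral in its original form and runs a long induction (Lemma \ref{L1}): it differentiates $F$ directly, isolates the problematic term $F_1=B_1\int\psi'J^{r+1}$, and repeatedly applies integration by parts (Rule P), L'H\^opital's rule (Rule H) and Lemma \ref{lemain} to show that each successive $t$-derivative extends continuously through $t=0$, with the $\lambda$-derivatives tracked separately at every stage via the classes $C^\omega$ and $C_*^\infty$ and the relations $\cong$, $\simeq$. Your substitution absorbs all of that bookkeeping into the single linear factor $t$, handles the parameter dependence for free, and yields an explicit closed form together with $\partial_tF(\lambda,0)=\bigl(\psi(\lambda,0)/(r+1)\bigr)^{1/(r+1)}$, which agrees with the value $(r+1)^{-\frac{1}{r+1}}\psi(\lambda,0)^{\frac{1}{r+1}}$ obtained in the paper. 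What the paper's heavier machinery might buy is a reusable calculus for integrands that do not factor so cleanly under rescaling; for the theorem as stated, your argument is complete and strictly simpler.
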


\subsection{Preliminaries.}

The proof requires some preparations.

\subsubsection{Notations.\label{Notacion}}

If $\Omega$ is an open set in $\mathbb{R}^{n}$, $C^{q}\left(  \Omega\right)  $
denotes the set of functions $f:\Omega\rightarrow\mathbb{R}$ that are
differentiable of class $C^{q}$, $q=0,1,2,...,\infty$. We adopt the following
simplifications for functions $f:\Lambda\times\mathcal{I}\rightarrow
\mathbb{R}$:
\begin{align}
f  &  \in C^{q}\Leftrightarrow\exists\varepsilon\text{ such that }f\in
C^{q}\left(  \Lambda\times I_{\varepsilon}\right) \label{simplif}\\
f  &  \in C_{\ast}^{q}\Leftrightarrow\exists\varepsilon\text{ such that }f\in
C^{q}\left(  \Lambda\times I_{\varepsilon}^{\ast}\right) \nonumber\\
f  &  \in C_{\ast}^{\infty}\Leftrightarrow f\in C_{\ast}^{q}\text{, }\forall
q\geq0
\end{align}

\begin{definition}
We say that $f\in\mathcal{C}^{\omega}$ if $f\in C_{\ast}^{\infty}$ and if
there exists $\overline{f}\in C^{0}$ with $\overline{f}\left(  \lambda
,t\right)  =f\left(  \lambda,t\right)  $ if $t\neq0$.
\begin{equation}%
\begin{tabular}
[c]{l}%
We agree to denote $f=\overline{f}\in C^{0}$%
\end{tabular}
\label{Abuso}%
\end{equation}

\end{definition}

\begin{remark}
Using convention (\ref{Abuso}) and when $f\in C_{\ast}^{\infty}$, we can write%
\[
f\in C^{q}\Leftrightarrow f\in C^{\omega}\cap C^{q}%
\]%
\[%
\begin{tabular}
[c]{l}%
For $f\in C_{\ast}^{\infty}$ we shall write\\
$\lim f$ instead of $\lim_{t\rightarrow0}f$\\
and we denote $f^{\left(  q\right)  }=\frac{\partial^{q}f}{\partial t^{q}}\in
C_{\ast}^{\infty}$%
\end{tabular}
\
\]

\end{remark}

Furthermore, we denote $I=I\left(  \lambda,t\right)  =t$ as the identity map
and $J=J\left(  \lambda,t\right)  =\left\vert t\right\vert $ as the absolute
value map. Thus, $\epsilon\in C_{\ast}^{\infty}$ is the sign function:
\[
\epsilon\left(  \lambda,t\right)  =\left\{
\begin{tabular}
[c]{l}%
$1$ if $t>0$\\
$-1$ if $t<0$%
\end{tabular}
\ \right.
\]
it then follows that%
\[
J=\epsilon I\in C_{\ast}^{\infty}\text{ }%
\]
Observe that%
\begin{equation}
\left(  J^{q}\right)  ^{\prime}=\epsilon qJ^{q-1} \label{Jp}%
\end{equation}

If $f\in C_{\ast}^{\infty}$, we denote $\int f$ $\in C_{\ast}^{\infty}$ as the
function:%
\[
\int f:t\longmapsto\int_{0}^{t}f\left(  x\right)  dx
\]
Note that by the Fundamental Theorem of Calculus (Barrow's rule):
\[
\left(  \int f\right)  ^{\prime}=f
\]

With these notations, function \ref{Ft} can be written as%
\begin{equation}
F=\left(  \epsilon\int\psi J^{r}\right)  ^{\frac{1}{r+1}} \label{alpha}%
\end{equation}

Finally, to avoid overloading the calculations, we define the following
relation in $C_{\ast}^{\infty}$:

If $f,g\in C_{\ast}^{\infty}$, we write $f\cong g$ if and only if the
following condition holds:%
\begin{equation}
f\cong g\Leftrightarrow\exists\varphi\in C^{\infty}\text{ and }\exists
a\neq0\text{ such that }g=af+\varphi\label{==}%
\end{equation}
If $f,g\in C_{\ast}^{\infty}$ we write $f\simeq g$ if $f$ and $g$ are
functionally related such that the following equivalence holds:
\[
f\in\mathcal{C}^{q}\Leftrightarrow g\in\mathcal{C}^{q},\forall
q=0,1,2,...,\infty
\]

It is easy to see that both are equivalence relations, and for our purposes,
the relevant fact is that:
\begin{equation}
\text{ }f\cong g\Rightarrow f\simeq g\text{ }\nonumber
\end{equation}

However, the converse is not true; for example, if $\varphi\in C^{\infty}, $
the functions $f$ and $g=\left(  f+\varphi\right)  ^{3}$ satisfy $f\simeq g$
and yet $f\ncong g$. Note that if $g=$ $\varphi f$ with $\varphi\in C^{\infty
}$ and if $\varphi\left(  0\right)  \neq0$, then $f\cong g$. The following
result is essential:

\begin{lemma}
\label{lemain} If $f:I_{\varepsilon}\rightarrow\mathbb{R}$ is a continuous
function, of class $C^{1}$ in $I_{\varepsilon}^{\ast}=\left]  -\varepsilon
,\varepsilon\right[  \backslash\left\{  0\right\}  $ and there exists
$\ell=\lim_{t\rightarrow0}f^{\prime}(t)$, then
\[
f\left(  t\right)  -f\left(  0\right)  = t\int_{0}^{1}f^{\prime}\left(
st\right)  ds
\]
and in particular $f$ is of class $C^{1}$ in $I_{\varepsilon}$, and
$f^{\prime}(0)=\ell$.
\end{lemma}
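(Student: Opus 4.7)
The plan is to reduce everything to an application of the Fundamental Theorem of Calculus, after first extending $f'$ continuously through the origin using the hypothesis that $\ell = \lim_{t\to 0} f'(t)$ exists.

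First, I would define $g:I_\varepsilon \to \mathbb{R}$ by $g(t)=f'(t)$ for $t\neq 0$ and $g(0)=\ell$. By hypothesis $g$ is continuous on $I_\varepsilon$. The key step is then to show that, for every $t \in I_\varepsilon$,
\[
f(t)-f(0)=\int_{0}^{t} g(x)\,dx.
\]
For this I would fix $t>0$ (the case $t<0$ is symmetric) and, for $0<\delta<t$, apply the ordinary FTC on $[\delta,t]$, where $f$ is $C^{1}$, to obtain $f(t)-f(\delta)=\int_{\delta}^{t}f'(x)\,dx=\int_{\delta}^{t}g(x)\,dx$. Letting $\delta\to 0^{+}$ and using continuity of $f$ at $0$ on the left, and continuity of $g$ on $[0,t]$ (so the improper integral is in fact a proper Riemann integral) on the right, yields the identity.

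Next, the change of variables $x=st$, $dx=t\,ds$ converts the integral into $t\int_{0}^{1} g(st)\,ds$, which for $t\neq 0$ and $s\in(0,1]$ coincides with $t\int_{0}^{1} f'(st)\,ds$; this proves the stated formula. Finally, to conclude that $f$ is $C^{1}$ on all of $I_\varepsilon$ with $f'(0)=\ell$, I would compute
\[
\lim_{t\to 0}\frac{f(t)-f(0)}{t}=\lim_{t\to 0}\frac{1}{t}\int_{0}^{t} g(x)\,dx = g(0)=\ell,
\]
where the last equality is the standard consequence of continuity of $g$ at $0$. Since the derivative of $f$ on $I_\varepsilon^{*}$ is $f'=g|_{I_\varepsilon^{*}}$ and $f'(0)=\ell=g(0)$, the derivative of $f$ on all of $I_\varepsilon$ equals $g$, which is continuous; hence $f\in C^{1}(I_\varepsilon)$.

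The only delicate point is the first step: justifying the passage to the limit $\delta\to 0^{+}$ in the identity $f(t)-f(\delta)=\int_{\delta}^{t} f'(x)\,dx$. This is what forces us to extend $f'$ continuously by $\ell$ at the origin and to use the hypothesis that the one‑sided limits of $f'$ exist; without that, the improper integral need not converge and the representation formula would fail. Once this is done, everything else is formal.
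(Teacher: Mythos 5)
Your proof is correct and rests on the same skeleton as the paper's (the fundamental theorem of calculus plus the existence of $\ell$), but you organize it differently and, in doing so, you are more careful at exactly the two points where the paper's own argument is thin. The paper sets $f_{t}(s)=f(st)$ and writes $f(t)-f(0)=\int_{0}^{1}\frac{df_{t}}{ds}\,ds=t\int_{0}^{1}f^{\prime}(st)\,ds$ by applying the FTC in the variable $s$ on all of $[0,1]$; this quietly uses differentiability of $f_{t}$ at $s=0$, i.e.\ differentiability of $f$ at the origin, which is precisely what is being proved, and it then obtains $f^{\prime}(0)=\ell$ by interchanging $\lim_{t\rightarrow0}$ with $\int_{0}^{1}$ without justification (one would invoke the boundedness of $f^{\prime}$ near $0$, which follows from the existence of $\ell$). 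You avoid both issues: by extending $f^{\prime}$ to the continuous function $g$ with $g(0)=\ell$, applying the FTC only on $[\delta,t]$ where $f$ is genuinely $C^{1}$, and letting $\delta\rightarrow0^{+}$, you get $f(t)-f(0)=\int_{0}^{t}g$ as a proper integral; the substitution $x=st$ then yields the displayed formula (the integrands $g(st)$ and $f^{\prime}(st)$ differ only at the single point $s=0$), and $f^{\prime}(0)=\lim_{t\rightarrow0}\frac{1}{t}\int_{0}^{t}g=g(0)=\ell$ follows from continuity of $g$ alone, with no interchange of limit and integral. Your closing observation that $f^{\prime}=g$ on all of $I_{\varepsilon}$, hence $f\in C^{1}(I_{\varepsilon})$, also makes explicit the continuity of $f^{\prime}$ at $0$, which the paper leaves implicit. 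In short: same idea, but your version supplies the justifications the paper omits.
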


\begin{proof}
For a fixed $t$, let $f_{t}=f_{t}\left(  s\right)  =f\left(  st\right)  $.
Then:
\[
f\left(  t\right)  -f\left(  0\right)  =\left[  f_{t}\right]  _{s=0}%
^{s=1}=\int_{0}^{1}\frac{df_{t}}{ds}ds=t\int_{0}^{1}f^{\prime}\left(
st\right)  ds
\]
therefore:
\begin{align*}
f^{\prime}\left(  0\right)   &  =\lim_{t\rightarrow0}\frac{f\left(  t\right)
-f\left(  0\right)  }{t}=\lim_{t\rightarrow0}\int_{0}^{1}f^{\prime}\left(
st\right)  ds=\\
&  =\int_{0}^{1}\lim_{t\rightarrow0}f^{\prime}\left(  st\right)  ds=\int
_{0}^{1}\ell ds=\ell
\end{align*}

\end{proof}

\subsubsection{Calculation Rules.}

\paragraph{Rule C1}%

\begin{equation}%
\begin{tabular}
[c]{l}%
If $m>1$, then $J^{m}\in C^{1}$ and $\left(  J^{m}\right)  ^{\prime} =
\epsilon mJ^{m-1}$%
\end{tabular}
\label{RC2}%
\end{equation}

\paragraph{Rule H.}

L'Hopital rule states:

If $f,g\in C^{1}\left(  I_{\varepsilon}^{\ast}\right)  $ satisfy $g^{\prime
}\left(  t\right)  \neq0$ $\forall t\neq0$, and $\lim_{t\rightarrow0}f\left(
t\right)  = \lim_{t\rightarrow0}g\left(  t\right)  = 0$, then:
\[
\exists\lim_{t\rightarrow0}\frac{f^{\prime}\left(  t\right)  }{g^{\prime
}\left(  t\right)  } = \ell\Rightarrow\lim_{t\rightarrow0}\frac{f\left(
t\right)  }{g\left(  t\right)  } = \ell
\]

Then for $\varphi\in\mathcal{C}^{0},m>1$:
\begin{equation}
\lim_{t\rightarrow0}\frac{\epsilon\int\varphi J^{n}}{J^{m}}=\lim
_{t\rightarrow0}\frac{\epsilon\varphi J^{n}}{\epsilon mJ^{m-1}}=\lim
_{t\rightarrow0}\varphi J^{n-m+1}=\left\{
\begin{array}
[c]{l}%
0\text{, if }n>m-1\\
\frac{1}{m}\varphi\left(  \lambda,0\right)  \text{ if }n=m-1
\end{array}
\right.  \label{RH}%
\end{equation}

and for $\varphi\in C^{\infty}$, and $n\geq m-1$:
\begin{equation}
\varphi_{n/m}=\frac{\epsilon\int\varphi J^{n}}{J^{m}}\in\mathcal{C}^{\omega
}\text{ and }\varphi_{n/m}\left(  \lambda,0\right)  =\left\{
\begin{array}
[c]{l}%
0\text{, if }n>m-1\\
\frac{1}{m}\varphi\left(  \lambda,0\right)  \text{ if }n=m-1
\end{array}
\right.  \label{RH1}%
\end{equation}
moreover, it holds that: $f\in C^{\omega}$, $f\left(  0\right)  =0\Rightarrow
\epsilon f\in C^{\omega}$, in particular:
\begin{equation}
\epsilon\varphi_{n/m}=\frac{\int\varphi J^{n}}{J^{m}}\in C^{\omega}\text{ if
}n>m-1 \label{RH2}%
\end{equation}

\begin{proof}
It is trivial using observation \ref{RH} to verify that for a fixed $\lambda=
\lambda_{0}$:
\[
\exists\lim_{t\rightarrow0}\varphi_{n/m}\left(  \lambda_{0},t\right)  =
\left\{
\begin{array}
[c]{l}%
0\text{, if } n>m-1\\
\frac{1}{m}\varphi\left(  \lambda_{0},0\right)  \text{ if } n=m-1
\end{array}
\right.
\]
but we need to prove that:
\[
\exists\lim_{\left(  \lambda,t\right)  \rightarrow\left(  \lambda
_{0},0\right)  }\varphi_{n/m}\left(  \lambda,t\right)  = \left\{
\begin{array}
[c]{l}%
0\text{, if } n>m-1\\
\frac{1}{m}\varphi\left(  \lambda_{0},0\right)  \text{ if } n=m-1
\end{array}
\right.
\]

Let us fix a small neighborhood $I_{\delta}\times\Lambda_{\delta}$ of $\left(
\lambda_{0},0\right)  $ and let:
\[
M = \sup_{\left(  \lambda,t\right)  \in I_{\delta}\times\Lambda_{\delta} }
\left\vert \varphi\left(  \lambda,t\right)  \right\vert , \text{ } N =
\sup_{\left(  \lambda,t\right)  \in I_{\delta}\times\Lambda} \left\vert
\left.  \frac{\partial\varphi}{\partial\lambda}\right\vert _{\left(
\lambda,t\right)  } \right\vert
\]
When $n>m-1$, we have:
\[
\left\vert \varphi_{n/m}\left(  \lambda,t\right)  \right\vert = \left\vert
\frac{\epsilon\int\varphi J^{n}}{J^{m}}\right\vert \leq\frac{M\int J^{n}%
}{J^{m}} \underset{\left(  \lambda,t\right)  \rightarrow\left(  \lambda
_{0},0\right)  }{\rightarrow} 0
\]
If $n=m-1$, then using the mean value theorem, for each $\left(
\lambda,t\right)  \in I_{\delta}\times\Lambda_{\delta}$:
\[
\varphi\left(  \lambda,t\right)  - \varphi\left(  \lambda_{0},t\right)  =
\left(  \lambda- \lambda_{0}\right)  \left.  \frac{\partial\varphi}%
{\partial\lambda}\right\vert _{\left(  \lambda_{1},t\right)  } \text{ for some
} \left(  \lambda_{1},t\right)  \in I_{\delta}\times\Lambda_{\delta}
\]
therefore:
\[
\left\vert \varphi\left(  \lambda,t\right)  - \varphi\left(  \lambda
_{0},t\right)  \right\vert \leq\left\vert \lambda- \lambda_{0}\right\vert N
\]
and thus:
\begin{align*}
&  \left\vert \varphi_{n/m}\left(  \lambda,t\right)  - \frac{1}{m}
\varphi\left(  \lambda,0\right)  \right\vert \\
&  \leq\left\vert \varphi_{n/m}\left(  \lambda,t\right)  - \varphi
_{n/m}\left(  \lambda_{0},t\right)  \right\vert + \left\vert \varphi
_{n/m}\left(  \lambda_{0},t\right)  - \frac{1}{m}\varphi\left(  \lambda
_{0},0\right)  \right\vert \\
&  = \left\vert \int\left(  \varphi\left(  \lambda,t\right)  - \varphi\left(
\lambda_{0},t\right)  \right)  J^{m-1}\right\vert + \left\vert \varphi
_{n/m}\left(  \lambda_{0},t\right)  - \frac{1}{m}\varphi\left(  \lambda
_{0},0\right)  \right\vert \\
&  \leq\int\left\vert \varphi\left(  \lambda,t\right)  - \varphi\left(
\lambda_{0},t\right)  \right\vert + \left\vert \varphi_{n/m}\left(
\lambda_{0},t\right)  - \frac{1}{m}\varphi\left(  \lambda_{0},0\right)
\right\vert \underset{\left(  \lambda,t\right)  \rightarrow\left(  \lambda
_{0},0\right)  }{\rightarrow} 0
\end{align*}

\end{proof}

\paragraph{Rule P.}

Using integration by parts:
\begin{equation}
\left.
\begin{array}
[c]{c}%
\varphi\in\mathcal{C}^{1}\\
n+1>0
\end{array}
\right\}  \Rightarrow\int\varphi J^{n}=\frac{\epsilon}{n+1}\left\{  \varphi
J^{n+1}-\int\varphi^{\prime}J^{n+1}\right\}  \label{RP}%
\end{equation}

\subsection{Proof of Theorem \ref{Baldo gen}}

We can write $F$ in the form:
\[
F=\epsilon\left(  \epsilon\int\psi J^{r}\right)  ^{\beta},\text{ }\beta
=\frac{1}{r+1}%
\]

\[%
\begin{tabular}
[c]{|l|}\hline
Since all variables $\left(  \lambda_{1},\ldots,\lambda_{n}\right)  \in
\Lambda$\\
play the same role, we will take $n=1$\\
that is, we take $\lambda= \lambda_{1}$.\\\hline
\end{tabular}
\]

We will freely use the notations introduced in Section \ref{Notacion}. Clearly
$F\in C^{0}$ since $\lim_{t\rightarrow0}F=\epsilon\lim_{t\rightarrow0}\left(
\epsilon\int\psi J^{r}\right)  ^{\beta}=0,$ and $F\left(  \lambda,0\right)
=0$.

To show that $F\in C^{1}$, we first prove that $F:t\mapsto F(\lambda,t)$
satisfies the hypotheses of Lemma \ref{lemain}. We have:
\begin{equation}
F^{\prime}=\left(  r+1\right)  ^{-\frac{1}{r+1}}\left(  \psi_{1}+F_{1}\right)
^{-\frac{r}{r+1}}\text{ with }\left\{  \text{
\begin{tabular}
[c]{l}%
$\psi_{1}=\psi^{-1/r}\in C^{\infty}$\\
$F_{1}=-\psi^{-\frac{r+1}{r}}J^{-r-1}\int\psi^{\prime}J^{\left(  r+1\right)
}$%
\end{tabular}
}\right.  \label{P1}%
\end{equation}

We note that $\lim F_{1}=0$ since, using rule H (\ref{RH}):$\lim J^{-r-1}%
\int\psi^{\prime}J^{\left(  r+1\right)  }=0$ and since $\psi_{1}>0$, we have:
\[
\lim F^{\prime}=\left(  r+1\right)  ^{-\frac{1}{r+1}}\psi_{1}\left(
\lambda,0\right)  ^{-\frac{1}{r}}%
\]
by Lemma \ref{lemain}, $F^{\prime}\left(  \lambda,t\right)  $ exists in
$\Lambda\times I_{\varepsilon}$ and $F^{\prime}\left(  \lambda,0\right)
=\left(  r+1\right)  ^{-\frac{1}{r+1}}\psi\left(  \lambda,0\right)  ^{\frac
{1}{r+1}}>0$

To proceed, it is essential to remark that $F^{\prime}\simeq F_{1}$ since by
(\ref{P1}), if $F_{1}\in\mathcal{C}^{k}$, then because $\psi_{1}\in C^{\infty
}$, it follows that $F^{\prime}\in C^{k}$; and if $F^{\prime}\in C^{k}$, then
$F_{1}\in C^{k}$ because $F_{1}=\left(  r+1\right)  ^{-\frac{1}{r}}%
F^{\prime-\frac{r+1}{r}}-\psi_{1}$

Therefore:
\begin{equation}%
\begin{tabular}
[c]{l}%
$F^{\prime}\simeq F_{1}=B_{1}\int\psi^{\prime}J^{\left(  r+1\right)  }\in
C^{\omega}$\\
$B_{1}=\psi_{11}J^{-r-1}$, $\psi_{11}=-\psi^{-\frac{r+1}{r}}\in C^{\infty}$%
\end{tabular}
\ \label{k=1}%
\end{equation}
and using rule H (\ref{RH}):$\lim F_{1}=\lim\left(  \psi_{11}J^{-r-1}\right)
=0$ and by rule H (\ref{RH2}), $F^{\prime}\simeq F_{1}\in C^{\omega}$.%

\begin{equation}%
\begin{tabular}
[c]{|l|}\hline
Notation: Henceforth, functions\\
denoted by $\psi_{i},$ $\psi_{ij},$ $\varphi_{i},$ $\varphi_{ij}$ are in
$C^{\infty}$.\\\hline
\end{tabular}
\label{RPsi}%
\end{equation}

Furthermore:
\[
\frac{\partial F_{1}}{\partial\lambda}=\frac{\partial\psi_{11}}{\partial
\lambda}J^{-r-1}\int\psi^{\prime}J^{\left(  r+1\right)  }+\psi_{11}%
J^{-r-1}\int\frac{\partial\psi^{\prime}}{\partial\lambda}J^{r+1}%
\]
It can then be proved using an inductive argument that for all $q\geq1$:
$\frac{\partial^{q}F_{1}}{\partial\lambda^{q}}=\sum\psi_{i}J^{-r-1}\int
\varphi_{i}J^{r+1}$ and then by the generalized rule H (\ref{RH1}),
$\frac{\partial F_{1}}{\partial\lambda}\in C^{\omega}$. By Lemma \ref{lemain},
it is concluded that:$\exists\left.  \frac{\partial^{q}F_{1}}{\partial
\lambda^{q}}\right\vert _{\left(  \lambda,t\right)  }\forall\left(
\lambda,t\right)  ,$ and $\frac{\partial^{q}F_{1}}{\partial\lambda^{q}}\in
C^{\omega}$ thus, since $F^{\prime}\simeq F_{1}$, we have proved that $F\in
C^{1}$ and $\exists\left.  \frac{\partial^{q}F^{\prime}}{\partial\lambda^{q}%
}\right\vert _{\left(  \lambda,t\right)  }\forall\left(  \lambda,t\right)  $
and $\forall q$

Let's see that $F\in C^{2}$. Indeed:$F_{1}^{\prime}=B_{1}^{\prime}\int
\psi^{\prime}J^{\left(  r+1\right)  }+B_{1}\psi^{\prime}J^{r+1}$ but the
second term is $C^{\infty}$ since $B_{1}\psi^{\prime}J^{\left(  r+1\right)
}=\psi_{11}\psi^{\prime}$, so it follows:$F_{1}^{\prime}\cong B_{1}^{\prime
}\int\psi^{\prime}J^{r+1}$ thus, taking $\psi_{21}=\psi_{11}^{\prime}$ and
$\psi_{22}=-r-1/\psi_{11}$ we get:$B_{1}^{\prime}=\psi_{21}J^{-r-1}%
+\epsilon\psi_{22}J^{-r-2}$ and by rule (\ref{RP}) $\int\psi^{\prime}%
J^{r+1}=\frac{\epsilon}{r+2}\left(  \psi^{\prime}J^{r+2}-\int\psi
^{\prime\prime}J^{r+2}\right)  $ we have:
\begin{align*}
F_{1}^{\prime}  &  \cong\frac{1}{r+2}\left\{  \epsilon B_{1}^{\prime}%
\psi^{\prime}J^{r+2}-\epsilon B_{1}^{\prime}\int\psi^{\prime\prime}%
J^{r+2}\right\} \\
&  \cong\epsilon B_{1}^{\prime}\int\psi^{\prime\prime}J^{r+2}%
\end{align*}
since: $\epsilon B_{1}^{\prime}\psi^{\prime}J^{\left(  r+2\right)  }=\left(
\epsilon\psi_{11}J^{-r-1}+\psi_{12}J^{-r-2}\right)  J^{r+2}=\psi_{11}\left(
\epsilon J\right)  +\psi_{12}=\psi_{11}+\psi_{12}\in C^{\infty}$

We thus have:
\begin{align}
F_{1}^{\prime}  &  \cong F_{2}=B_{2}\int\psi^{\prime\prime}J^{r+2}%
\label{k=2}\\
B_{2}  &  =\epsilon B_{1}^{\prime}=\epsilon\psi_{21}J^{-r-1}+\psi_{22}%
J^{-r-2},\text{ }\psi_{ij}\in C^{\infty}\nonumber
\end{align}

But now it can be proved, using the rule (\ref{RH1}) (\ref{RH2}) and Lemma
\ref{lemain} repeatedly, that $F_{1}^{\prime}\in C^{\omega}$, and for each
$q=1,2..,\exists\frac{\partial^{q}F_{2}}{\partial\lambda^{q}}\in C^{\omega}$

And since $F^{\prime}\simeq F_{1}\cong F_{2}$ and $F_{1}^{\prime}\in
C^{\omega}$, it is concluded that $F^{\prime\prime}\in C^{\omega}$, and for
$k=1,2...$, $\exists\frac{\partial^{k}F^{\prime}}{\partial\lambda^{k}}$ Thus
$F\in C^{2}.$

We will prove the following Lemma:

\begin{lemma}
\label{L1} Let $B_{1}=\psi_{11}J^{-r-1}$, $\psi_{11}=-\psi^{-\frac{r+1}{r}}$,
and $F_{0}=F$. For each $k\geq1$, the functions:
\begin{equation}
B_{k+1}=\epsilon B_{k}^{\prime},\text{ }F_{k}=B_{k}\int\psi^{\left(  k\right)
}J^{r+k} \label{bkp}%
\end{equation}
satisfy:
\begin{equation}
F_{k-1}^{\prime}\cong F_{k}\in C^{\omega} \label{fkp}%
\end{equation}%
\[
\exists\left.  \frac{\partial^{q}F_{k}}{\partial\lambda^{q}}\right\vert
_{\left(  \lambda,t\right)  }\forall\left(  \lambda,t\right)  ,\text{ and
}\frac{\partial^{q}F_{k}}{\partial\lambda^{q}}\in C^{\omega},\forall q
\]

\end{lemma}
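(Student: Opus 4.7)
The plan is to prove Lemma \ref{L1} by induction on $k$, with the base cases $k=1$ and $k=2$ already established in the text (equations (\ref{k=1}) and (\ref{k=2})). The inductive step requires showing that if the conclusions hold at level $k$, then $F_k' \cong F_{k+1} \in C^\omega$ and that all partial derivatives $\partial^q F_{k+1}/\partial \lambda^q$ exist and lie in $C^\omega$.

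The core calculation combines Leibniz with integration by parts. Differentiating $F_k = B_k \int \psi^{(k)} J^{r+k}$ gives
\[
F_k' = B_k' \int \psi^{(k)} J^{r+k} + B_k \psi^{(k)} J^{r+k},
\]
and applying rule (\ref{RP}) to the remaining integral, together with the definition $B_{k+1} = \epsilon B_k'$ and the identity $\epsilon^2 = 1$, produces
\[
F_k' = \frac{1}{r+k+1} B_{k+1} \psi^{(k)} J^{r+k+1} - \frac{1}{r+k+1} F_{k+1} + B_k \psi^{(k)} J^{r+k}.
\]
Establishing $F_k' \cong F_{k+1}$ then reduces to verifying that the two boundary products $B_k \psi^{(k)} J^{r+k}$ and $B_{k+1} \psi^{(k)} J^{r+k+1}$ lie in $C^\infty$ and hence can be absorbed under $\cong$.

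The key technical step is a subsidiary induction establishing a normal form
\[
B_k = \sum_{j=1}^{k} \epsilon^{\delta_{k,j}} A_{k,j} J^{-r-j}, \qquad A_{k,j} \in C^\infty,
\]
where the parity $\delta_{k,j} \in \{0,1\}$ is chosen so that $\epsilon^{\delta_{k,j}} J^{k-j}$ collapses to $I^{k-j}$, a $C^\infty$ function. Using the recurrence $B_{k+1} = \epsilon B_k'$ together with rule C1, one checks that the required parity is preserved at each step (concretely, $\delta_{k,j} \equiv k+j+1 \pmod 2$). This ensures that both $B_k \psi^{(k)} J^{r+k}$ and $B_{k+1} \psi^{(k)} J^{r+k+1}$ reduce to finite $C^\infty$ sums. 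Having secured $F_k' \cong F_{k+1}$, the membership $F_{k+1} \in C^\omega$ follows by expanding $F_{k+1}$ into finitely many summands $\epsilon^{\delta_{k+1,j}} A_{k+1,j} J^{-r-j} \int \psi^{(k+1)} J^{r+k+1}$ and invoking rules (\ref{RH1})--(\ref{RH2}), whose exponent hypothesis $r+k+1 > r+j-1$ reduces to $j \leq k+1$ and holds for every term in the sum.

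For the parameter derivatives, $\partial/\partial \lambda$ commutes with $\int$ and with every $J^{-r-j}$ factor and only differentiates the $C^\infty$ coefficients $A_{k+1,j}$ and $\psi^{(k+1)}$; thus $\partial^q F_{k+1}/\partial \lambda^q$ has the same shape as $F_{k+1}$ with fresh $C^\infty$ coefficients, and the same rules (\ref{RH1})--(\ref{RH2}) together with Lemma \ref{lemain} yield existence everywhere and $C^\omega$ membership. The principal obstacle I expect is the parity bookkeeping for $B_k$: one must verify that the $\epsilon$-signs propagate under $B_{k+1} = \epsilon B_k'$ in precisely the pattern needed for both boundary products to land in $C^\infty$ rather than merely in $C^\omega$, since a single mismatch would prevent absorption under $\cong$ and break the inductive chain $F_{k-1}' \cong F_k$.
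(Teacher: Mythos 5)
Your proposal follows essentially the same route as the paper's proof: induction on $k$, the normal form $B_{k}=\sum_{\ell=1}^{k}\epsilon^{k-\ell}\psi_{k,\ell}J^{-r-\ell}$, Leibniz plus Rule P, absorption of the two boundary products into $C^{\infty}$ via $\epsilon J=I$, and Rule H (\ref{RH1})--(\ref{RH2}) for the $C^{\omega}$ membership of $F_{k+1}$ and of its $\lambda$-derivatives. One small slip: your concrete parity formula $\delta_{k,j}\equiv k+j+1\pmod 2$ contradicts the defining property you state in the same sentence, which forces $\delta_{k,j}\equiv k-j\equiv k+j\pmod 2$ (matching the paper's $\epsilon^{k-\ell}$); since your argument only uses the defining property, nothing breaks.
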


\subsubsection{End of the Proof of Theorem \ref{Baldo gen}}

Assuming Lemma \ref{L1} is proven and knowing that each $F_{k}$ admits all
partial derivatives with respect to $\lambda$, the following implications are
valid:
\[%
\begin{tabular}
[c]{l}%
$F^{\prime}\simeq F_{1}$\\
$F_{1}^{\prime}\cong F_{2}\in C^{\omega}\Rightarrow F^{\prime}\in C^{1}$\\
$F_{2}^{\prime}\cong F_{3}\in C^{\omega}\Rightarrow F^{\prime}\in C^{2}$\\
etc...
\end{tabular}
\
\]
and we arrive at:
\begin{equation}
\exists\frac{\partial^{q+s}F}{\partial\lambda^{q}\partial t^{s}}\in C^{\omega
}\text{ if }s\geq1 \label{Drds}%
\end{equation}
but the problem now is to prove that:
\[
\exists\frac{\partial^{q}F}{\partial\lambda^{q}}\in C^{\omega}%
\]

Indeed, substituting in Lemma \ref{lemain} the function $f\left(  t\right)  $
by the function $f^{\lambda}\left(  t\right)  =F\left(  t,\lambda\right)  $
for a fixed $\lambda\in\Lambda$, and taking into account that $F\left(
0,\lambda\right)  =0,\forall\lambda$, we have:
\[
F\left(  t,\lambda\right)  =t\int_{0}^{1}\frac{\partial F}{\partial t}\left(
st,\lambda\right)  ds
\]
using now (\ref{Drds}), it is seen that:
\[
\frac{\partial^{q}F}{\partial\lambda^{q}}=t\int_{0}^{1}\frac{\partial
F}{\partial\lambda^{q}\partial t}\left(  st,\lambda\right)
\]
which belongs to $C^{\omega}$.

\subsubsection{Proof of Lemma \ref{L1}}

\begin{proof}
Note first that the $B_{k}$ for $k\geq1$are of the form (for certain
$\psi_{ij}\in C^{\infty}$):
\[
B_{k}=\epsilon B_{k-1}^{\prime}=\sum_{\ell=1}^{k}\epsilon^{k-\ell}\psi
_{k,\ell}J^{-r-\ell}%
\]

Observe first that the $F_{k}=B_{k}\int\psi^{\left(  k\right)  }J^{r+k}$ thus
constructed satisfy $F_{k}\in C^{\omega}$ since using the rule H (\ref{RH2})
we have:
\[
\epsilon^{k-\ell}\psi_{k,\ell}J^{-r-\ell}\int\psi^{\left(  k\right)  }%
J^{r+k}\in C^{\omega}\text{ for }1\leq\ell\leq k
\]
furthermore, as $\left(  r+\ell\right)  <\left(  r+k\right)  +1$ for
$1\leq\ell\leq k$, by (\ref{RH2}):
\[
\lim F_{k}=\lim\sum_{\ell=1}^{k}\epsilon^{k-\ell}\psi_{k,\ell}J^{-r-\ell}%
\int\psi^{\left(  k\right)  }J^{r+k}=0
\]

For $k=1$, the lemma was already proven in (\ref{k=1}).

The proof of the lemma for $k\geq2$ is done by induction on $k$. We have
proven the Lemma for $k=2$ in (\ref{k=2}), i.e., $F_{1}^{\prime}\cong F_{2}$.

Assuming the induction hypothesis (\ref{fkp}) $F_{k-1}^{\prime}\cong F_{k}\in
C^{\omega}$ and (\ref{bkp}) $F_{k}=B_{k}\int\psi^{\left(  k\right)  }J^{r+k}$
for some $k>2$, we have:
\[
F_{k}^{\prime}=B_{k}^{\prime}\int\psi^{\left(  k\right)  }J^{r+k}+B_{k}%
\psi^{\left(  k\right)  }J^{r+k}%
\]
but the second term is $C^{\infty}$ since:
\begin{align*}
B_{k}\psi^{\left(  k\right)  }J^{r+k}  &  =\left(  \sum_{\ell=1}^{k}%
\epsilon^{k-\ell}\psi_{k,\ell}J^{-r-\ell}\right)  \psi^{\left(  k\right)
}J^{r+k}\\
&  =\sum_{\ell=1}^{k}\psi_{k,\ell}\psi^{\left(  k\right)  }\left(
\epsilon^{k-\ell}J^{k-\ell}\right)  \in C^{\infty}\text{ since }\epsilon J=I
\end{align*}
so $F_{k}=B_{k}\int\psi^{\left(  k\right)  }J^{\left(  r+k\right)  }$, and
$F_{k}^{\prime}\cong B_{k}^{\prime}\int\psi^{\left(  k\right)  }J^{\frac
{2k+1}{2}}$ applying now rule P (\ref{RP}), we have:
\[
\int\psi^{\left(  k\right)  }J^{k+r}=\frac{\epsilon}{k+r+1}\left\{
\psi^{\left(  k\right)  }J^{k+r+1}-\int\psi^{\left(  k+1\right)  }%
J^{k+r+1}\right\}
\]
therefore:
\begin{align*}
F_{k}^{\prime}  &  =\frac{1}{k+r+1}\left(  \epsilon B_{k}^{\prime}\right)
\left\{  \psi^{\left(  k\right)  }J^{k+r+1}-\int\psi^{\left(  k+1\right)
}J^{k+r+1}\right\} \\
&  \cong B_{k+1}\int\psi^{\left(  k+1\right)  }J^{k+r+1}%
\end{align*}
since $\epsilon B_{k}^{\prime}\psi^{\left(  k\right)  }J^{k+r+1}\in C^{\infty
}$, because:
\begin{align*}
\epsilon B_{k}^{\prime}\psi^{\left(  k\right)  }J^{k+\ell+1}  &  =B_{k+1}%
\psi^{\left(  k\right)  }J^{k+r+1}\\
&  =\left(  \sum_{\ell=1}^{k+1}\epsilon^{k-\ell+1}\psi_{k+1,\ell}J^{-r-\ell
}\right)  \psi^{\left(  k\right)  }J^{k+r+1}\\
&  =\sum_{\ell=1}^{k+1}\psi_{k+1,\ell}\psi^{\left(  k\right)  }\left(
\epsilon J\right)  ^{k-\ell+1}\in C^{\infty}%
\end{align*}
where we have used $\epsilon J=I$.

Thus the conditions (\ref{fkp}) and (\ref{bkp}) of the induction hypothesis
hold for $k+1$:
\[
F_{k}^{\prime}\cong F_{k+1}=B_{k+1}\int\psi^{\left(  k+1\right)  }J^{k+r+1}\in
C^{\omega}\text{ with }B_{k+1}=\epsilon B_{k}^{\prime}%
\]

\end{proof}

\newpage

\end{document}